\newtheorem{thm}{Theorem}
\newtheorem{lem}[thm]{Lemma}
\newtheorem{cor}[thm]{Corollary}
\newtheorem{prop}[thm]{Proposition}
\theoremstyle{definition}
\newtheorem{remark}[thm]{Remark}
\newtheorem{df}[thm]{Definition}
\numberwithin{thm}{section}
\numberwithin{equation}{section}
\newcommand{\ZZ}{\mathbb{Z}} 
\newcommand{\QQ}{\mathbb{Q}} 
\newcommand{\CC}{\mathbb{C}} 
\newcommand{\FF}{\mathbb{F}} 
\newcommand{\cA}{\mathcal{A}}
\newcommand{\cB}{\mathcal{B}}
\newcommand{\cD}{\mathcal{D}}
\newcommand{\QQb}{\overline{\mathbb{Q}}}
\newcommand{\ol}[1]{\overline{#1}} 
\newcommand{\id}{\mathrm{id}} 
\newcommand{\Hom}{\mathrm{Hom}} 
\newcommand{\End}{\mathrm{End}} 
\newcommand{\MM}{\mathrm{M}} 
\newcommand{\Gal}{{\rm Gal}}
\newcommand{\into}{\hookrightarrow} 
\newcommand{\inv}{\mathrm{inv}} 
\newcommand{\Br}{{\rm Br}}
\newcommand{\fr}[1]{\mathfrak{#1}} 
\newcommand{\ca}[1]{\mathcal{#1}} 
\begin{document}

\title
{
An equivalent condition for abelian 
varieties over finite fields to have QM
}
\date{}

\author{Keisuke Arai, Yuuki Takai}

\date{}

\maketitle



\begin{abstract} 
In this paper, we give an equivalent condition for an abelian variety
over a finite field to have multiplication by a quaternion algebra over 
a number field. 
We prove the result by combining 
Tate's classification of the endomorphism algebras of abelian varieties
over finite fields
with Yu's criterion of the existence of homomorphisms
between semi-simple algebras.
\end{abstract}

\noindent
{\bf keywords:}
abelian varieties, QM, endomorphism algebras, quaternion algebras

\noindent
2020 \textit{Mathematics Subject Classification.}
Primary 11G10; Secondary 11R52, 14K05.


\section{Introduction}

For abelian varieties over finite fields, Weil \cite{weil1948varietes},
Honda~\cite{honda1968isogeny}, and Tate \cite{tate1966endomorphisms} 
built a theory, so-called Honda-Tate theory, 
which gives a bijection between the isogeny classes 
of abelian varieties over a finite field $\FF_q$ and the conjugacy classes 
of some algebraic numbers (called Weil $q$-numbers), 
where $q$ is a power of a prime $p$. 
Furthermore, Tate determined the structures of the endomorphism algebras
of simple abelian varieties over finite fields. 

As a subsequent problem, conditions for abelian varieties 
to have multiplication of a given type have been investigated. 
In this direction, 
Jordan \cite[Theorem 2.1]{jordan1986points} 
determined all possible endomorphism algebras of abelian surfaces
over finite fields with quaternionic multiplication (or simply QM)
by an indefinite quaternion division algebra $B$ over $\QQ$, 
by using Waterhouse's classification of elliptic curves over finite fields
\cite[Theorem 4.1]{waterhouse1969abelian}.
As a consequence, he \cite[Proposition 2.3]{jordan1986points} 
gave a necessary condition for abelian surfaces over finite fields
to have QM by $B$.
Yu \cite[Theorem 1.1]{yu2013endo} 
extended these results to abelian surfaces
over any field, by using his work \cite[Theorem 1.2]{yu2012embeddings} 
on the existence of homomorphisms between semi-simple algebras.  

Jordan and the first author applied 
\cite[Theorem 2.1 and Proposition 2.3]{jordan1986points} 
to studies of rational points on
a Shimura curve associated with an indefinite quaternion division algebra 
$B$ over $\QQ$
(see \cite[Theorem 6.3]{jordan1986points},
\cite[Theorem 1.1]{Ara16}, and \cite[Theorems 2.3 and 2.4]{Ara18}).
Note that such a Shimura curve has a moduli interpretation over $\QQ$,
parameterizing isomorphism classes of QM-abelian surfaces by
(a maximal order of) $B$.
It is natural to consider an application of Jordan's argument
to a study of rational points on
a Shimura curve $M$ 
associated with a quaternion division algebra $B_0$ 
over a totally real number field $F_0$ of degree $g>1$.  
However, such a Shimura curve $M$ has no moduli interpretation over $F_0$. 
Then we should consider a Shimura curve $M'$ associated with 
$D=B_0\otimes_{F_0} E$ for a totally imaginary quadratic extension $E$ of $F_0$. 
The curve $M'$ has a moduli interpretation over $E$.
More precisely, $M'$ is a Shimura variety with PEL-structure 
consisting of an abelian variety of dimension $4g$, multiplication 
by an order of $D$, a polarization, and a level structure. 
For further details, see \cite{shimura1970canonical}, \cite{deligne1971travaux}, 
and \cite{morita1981reduction}. 

In this paper, based on this perspective, 
we give an equivalent condition for an abelian variety over $\FF_q$ 
of dimension $4g$ to have QM by a (not necessarily division) 
quaternion algebra $D$ over a number field $E$ of degree $2g$,
where $g\geq 1$.


Throughout this paper, we assume the following:
\begin{itemize}
\item
Every ring has $1$ (multiplicative identity)
and $1\ne 0$.
\item
Every ring homomorphism preserves $1$. 
\item
Every abelian variety over a field has positive dimension.
\item
``$A$ is a simple abelian variety over a field $k$''
means that $A$ is an abelian variety over $k$ which is simple over $k$.
\end{itemize}
To state our main theorem, we introduce some notation.
Let $A$ be an abelian variety over a field $k$, 
and $\End_k(A)$ its endomorphism ring over $k$.
Let 
\[
 \ca{E} =\End^0_k(A) = \End_k(A) \otimes_{\ZZ}\QQ
\]
be the endomorphism algebra of $A$ over $k$.
Note that $\ca{E}$ is a $\QQ$-algebra 
having $\id_A\otimes 1$ as a multiplicative identity.
Since $\End_k(A)$ is a finitely generated free $\ZZ$-module
(see \cite[\S 19, Theorem 3]{Mum70} or \cite[Theorem 12.5]{Mil86}),
the natural map
$\End_k(A)\rightarrow\ca{E}$
is injective.
Then we consider $\End_k(A)$ as a subring of $\ca{E}$.
Let $E$ be a number field of degree $2g$ where $g\geq 1$,
and $D$ a quaternion algebra 
over $E$. 
We do not assume that $E$ is totally imaginary or CM,
because such an assumption is unnecessary to prove the main theorem.
Nevertheless, we assume that $E$ is totally imaginary in
Corollary \ref{cor:main-intro-totally-real},
where the field generated by the Frobenius endomorphism is 
assumed to be totally real.
In this paper,
we say that $A$ has {\it quaternionic multiplication} by $D$
(or $A$ is an abelian variety with {\it quaternionic multiplication} by $D$) if  
there is a ring homomorphism
$$\imath \colon D \to \ca{E}.$$
In such a case, the map $\imath$ is injective since $D$ is a simple algebra.
Note also that $\imath$ is a $\QQ$-algebra homomorphism.
We often write {\it QM} instead of quaternionic multiplication for simplicity.
An abelian variety with QM shall be called a {\it QM-abelian variety}.

For abelian varieties $A, A'$ over $k$, we denote 
$$A\sim_k A'$$ 
if $A$ is isogenous  to $A'$ over $k$.
%
We know that
$A$ is decomposed as 
\begin{align}
\label{eq:A_prod_Ai}
A \sim_k \prod_{i=1}^{\ell} A_i^{m_i},
\end{align}
where $m_i\geq 1$ is an integer, 
$A_i$ is a simple abelian variety over $k$,
and $A_i\not\sim_k A_j$ if $i\neq j$;
the $\ell$, $m_i$'s are uniquely determined and the $A_i$'s are uniquely determined
up to isogeny over $k$
(see 
\cite[\S 12]{Mil86}).
We fix such a decomposition.
We say that $A$ is {\it iso-simple} over $k$ if 
$\ell=1$ in the decomposition \eqref{eq:A_prod_Ai}.
%
In \eqref{eq:A_prod_Ai}, 
each $A_i^{m_i}$ is called an {\it iso-simple factor} of $A$ over $k$.
Let
$$\ca{E}_i = \End_k^0(A_i)$$ 
for $i=1,\dots,\ell$.
Then each $\ca{E}_i$ is a division algebra.
We have
isomorphisms
\[
 \ca{E} 
 \simeq \prod_{i=1}^{\ell} \End_k^0(A_i^{m_i})
 \simeq \prod_{i=1}^{\ell} \MM_{m_i}(\ca{E}_i)
\]
of $\QQ$-algebras, which we often identify.
Here,
$\MM_{m_i}(\ca{E}_i)$ is the ring of $m_i\times m_i$ matrices
with entries in $\ca{E}_i$.
We consider $\ca{E}_i$ as a $\QQ$-subalgebra of
$\MM_{m_i}(\ca{E}_i)\simeq\End_k^0(A_i^{m_i})$
via the diagonal embedding.
Then the centers of $\ca{E}_i$ and $\MM_{m_i}(\ca{E}_i)$ coincide.
Note that $\ca{E}$ is semi-simple, and the center of $\ca{E}$
is the product of the centers of 
$\ca{E}_i$'s.
From the above description, we have:
\begin{lem}
\label{lem:simple_alg}
Let $A$ be an abelian variety over a field $k$.
Then the following conditions are equivalent:
\begin{enumerate}[\upshape(1)]
\item 
$A$ is iso-simple over $k$. 
\item 
$\End_k^0(A)$ is a simple algebra. 
\item 
The center of $\End_k^0 (A)$ is a field.
\end{enumerate}
\end{lem}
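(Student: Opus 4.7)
The plan is to establish the cycle of implications $(1)\Rightarrow(2)\Rightarrow(3)\Rightarrow(1)$, reducing each condition to a statement about the integer $\ell$ in the decomposition \eqref{eq:A_prod_Ai} via the structural isomorphism $\ca{E}\simeq\prod_{i=1}^{\ell}\MM_{m_i}(\ca{E}_i)$ recalled just before the lemma, where each $\ca{E}_i$ is a division algebra.

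For $(1)\Rightarrow(2)$, I would observe that iso-simplicity means $\ell=1$, so $\ca{E}\simeq\MM_{m_1}(\ca{E}_1)$ is a matrix algebra over the division algebra $\ca{E}_1$; such an algebra is simple because the only two-sided ideals of $\MM_n(D)$ with $D$ a division ring are $0$ and the whole algebra. The implication $(2)\Rightarrow(3)$ is the classical Wedderburn--Artin fact that the center of a finite-dimensional simple algebra over a field is itself a field. For $(3)\Rightarrow(1)$, the center of $\ca{E}$ decomposes as the product $\prod_{i=1}^{\ell}Z(\ca{E}_i)$, where each $Z(\ca{E}_i)$ is a field since $\ca{E}_i$ is a division algebra; if $\ell\geq 2$, this product contains the nontrivial idempotent $(1,0,\ldots,0)$ and therefore has zero divisors, so it cannot be a field and one must have $\ell=1$.

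There is no genuine obstacle here: the lemma is essentially a direct transcription of the Wedderburn structure of $\ca{E}$ already set up in the excerpt, and each implication is a short consequence of elementary ring theory. The one routine point to keep in mind is that the center behaves correctly under both the product decomposition and the passage from $\ca{E}_i$ to $\MM_{m_i}(\ca{E}_i)$, but this is precisely what the paragraph preceding the lemma records when it notes that the center of $\ca{E}$ is the product of the centers of the $\ca{E}_i$'s.
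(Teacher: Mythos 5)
Your proof is correct and follows exactly the reasoning the paper leaves implicit: the lemma is stated immediately after the paper records the decomposition $\ca{E}\simeq\prod_{i=1}^{\ell}\MM_{m_i}(\ca{E}_i)$ and the observation that the center of $\ca{E}$ is the product of the centers of the $\ca{E}_i$'s, with the phrase ``From the above description, we have:'' standing in for precisely the chain of elementary ring-theoretic observations you spell out.
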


As for QM, we have the following lemma:
\begin{lem}
\label{lem:QM}
Let $A$ be an abelian variety over a field $k$, and decompose $A$ as in 
\eqref{eq:A_prod_Ai}.
Then the following conditions are equivalent:
\begin{enumerate}[\upshape(1)]
\item
$A$ has QM by $D$.
\item
$A_i^{m_i}$ has QM by $D$ for all $i\in\Set{1,\dots,\ell}$.
\end{enumerate}
\end{lem}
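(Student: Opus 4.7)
The plan is to leverage the already-established $\QQ$-algebra isomorphism
\[
 \ca{E} \;\simeq\; \prod_{i=1}^{\ell} \End_k^0(A_i^{m_i}) \;\simeq\; \prod_{i=1}^{\ell} \MM_{m_i}(\ca{E}_i),
\]
so the lemma essentially reduces to a formal statement about ring homomorphisms into a finite product of rings. The two implications are both short, and the main (minor) subtlety is only to check that identity elements are preserved under the projections.

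For the direction (2) $\Rightarrow$ (1), suppose that for each $i$ we are given a ring homomorphism $\imath_i \colon D \to \End_k^0(A_i^{m_i})$. Then the tuple $(\imath_1,\dots,\imath_\ell)$ defines a ring homomorphism
\[
 D \;\longrightarrow\; \prod_{i=1}^{\ell} \End_k^0(A_i^{m_i}) \;\simeq\; \ca{E},
\]
giving QM by $D$ on $A$.

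For the direction (1) $\Rightarrow$ (2), suppose $\imath \colon D \to \ca{E}$ is a ring homomorphism. Let $\pr_i \colon \ca{E} \to \End_k^0(A_i^{m_i})$ be the $i$-th projection coming from the product decomposition. Since the isomorphism is a $\QQ$-algebra isomorphism, $\pr_i$ sends the multiplicative identity of $\ca{E}$ to the multiplicative identity of $\End_k^0(A_i^{m_i})$. Hence $\pr_i \circ \imath \colon D \to \End_k^0(A_i^{m_i})$ is additive, multiplicative, and sends $1$ to $1$, so it is a ring homomorphism, giving QM by $D$ on $A_i^{m_i}$.

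There is no real obstacle here: the proof is essentially the universal property of the product of rings applied to the decomposition of $\ca{E}$. The only thing to double-check carefully is that the component maps $\pr_i \circ \imath$ preserve the identity, which follows at once from the fact that the decomposition isomorphism is a unital $\QQ$-algebra isomorphism.
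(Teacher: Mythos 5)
Your proof is correct and follows exactly the same route as the paper's: projecting $\imath$ onto each factor $\End_k^0(A_i^{m_i})$ for (1) $\Rightarrow$ (2), and taking the product of the $\imath_i$'s for (2) $\Rightarrow$ (1). The extra remark about projections preserving the identity is a harmless elaboration of the same argument.
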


\begin{proof}
Assume that $A$ has QM by $D$.
Then there is a ring homomorphism
$\imath:D\rightarrow\ca{E}$.
Let 
$$pr_i:\ca{E} \simeq \prod_{i=1}^{\ell} \End_k^0(A_i^{m_i})
\rightarrow\End_k^0(A_i^{m_i})$$
be the $i$-th projection.
Then the composite map
$$pr_i\circ\imath:D\rightarrow\End_k^0(A_i^{m_i})$$
endows $A_i^{m_i}$ with QM by $D$.

Conversely, assume that there is a ring homomorphism
$\imath_i:D\rightarrow\End_k^0(A_i^{m_i})$
for each $i$.
Then the product map
$$\displaystyle\prod_{i=1}^\ell \imath_i:D
\rightarrow\prod_{i=1}^{\ell} \End_k^0(A_i^{m_i})\simeq\ca{E}$$
endows $A$ with QM by $D$.
\end{proof}

When $k$ is a finite field and $A$ is an abelian variety over $k$,
let 
$$\pi=\pi_A\in\End_k(A)\subseteq\ca{E}$$
be the Frobenius endomorphism of $A$ relative to $k$. 
Let $\QQ[\pi]\subseteq\ca{E}$ be the subalgebra generated 
by $\QQ$ and $\pi$.
Then $\QQ[\pi]$ is the center of $\ca{E}$
by \cite[Theorem 2 (a)]{tate1966endomorphisms}.
By Lemma \ref{lem:simple_alg}, the following conditions are equivalent:
\begin{itemize}
\item
$A$ is iso-simple over $k$.
\item
$\ca{E}$ is simple. 
\item
$\QQ[\pi]$ is a field.
\end{itemize}
In the decomposition in \eqref{eq:A_prod_Ai},
let 
\begin{itemize}
\item
$\pi_{A_i}\in\End_k(A_i)\subseteq\ca{E}_i$,
\item
$\pi_{A_i^{m_i}}\in\End_k(A_i^{m_i})
\subseteq\End_k^0(A_i^{m_i})\simeq\MM_{m_i}(\ca{E}_i)$
\end{itemize}
be the Frobenius endomorphisms relative to $k$ for $i=1,\dots,\ell$.
Then the diagonal embedding
$\ca{E}_i\subseteq\MM_{m_i}(\ca{E}_i)\simeq\End_k^0(A_i^{m_i})$
sends $\pi_{A_i}$ to $\pi_{A_i^{m_i}}$.
In this case, we identify and write
\begin{align}
\label{eq:pi}
\pi_{A_i}=\pi_{A_i^{m_i}}=\pi_i.
\end{align}
Then $\QQ[\pi_i]$ is a field, and it is a $\QQ$-subalgebra of $\ca{E}_i$.
We see that
$\QQ[\pi_i]$ is the center of $\ca{E}_i$ and $\MM_{m_i}(\ca{E}_i)$.
%
When $\QQ[\pi]$ (resp. $\QQ[\pi_i]$) is a field, 
we write $\QQ[\pi]=\QQ(\pi)$ (resp. $\QQ[\pi_i]=\QQ(\pi_i)$).
Then we have
\begin{align}
\label{eq:Q[pi]}
 \QQ[\pi] \simeq \prod_{i=1}^{\ell}\QQ(\pi_i)
\end{align}
inside
$\displaystyle\ca{E} 
\simeq \prod_{i=1}^{\ell} \MM_{m_i}(\ca{E}_i)$,
where $\pi$ corresponds to $(\pi_1,\dots,\pi_\ell)$.
When $\ell=1$, i.e., $A$ is iso-simple over $k$, we usually identify
$\pi=\pi_1$.
 
Our main theorem is as follows: 
\begin{thm}
\label{thm:main}
Let $A$ be an abelian variety of dimension $4g$ over $k=\FF_q$, 
let $E$ be a number field of degree $2g$, and let $D$ be a quaternion 
algebra over $E$. 
If $A$ has QM by $D$, 
then the number $\ell$ of iso-simple factors of $A$ over $k$ is $1$ or $2$.
Moreover, we have the following: 
\begin{enumerate}[\upshape(1)]
\item
Assume $\ell=1$. 
Then there are a simple abelian variety $A_1$ over $k$ 
and an integer $m_1\geq 1$ such that
\begin{itemize}
\item
$A\sim_k A_1^{m_1}$, and
\item
$\ca{E}_1 = \End^0_k(A_1)$ is a central division algebra over $\QQ(\pi)$.
\end{itemize} 
Let $c\geq 1$ be the integer such that $[\ca{E}_1:\QQ(\pi)]=c^2$.
In this case, $A$ has QM by $D$ if and only if one of the following holds: 
\begin{enumerate}
\item[\rm (i)]
There is an embedding $\jmath\colon \QQ(\pi)\into E$ such that 
		\begin{enumerate}
			\item[\rm (a)] 
			if $D \simeq \MM_2(E)$, 
			then $\ca{E}_1\otimes_{\QQ(\pi),\jmath} E \simeq \MM_c(E)$ holds, 
			\item[\rm (b)] 
			if $D \not\simeq \MM_2(E)$, then $c$ is even and 
				$\ca{E}_1\otimes_{\QQ(\pi),\jmath} E \simeq \MM_{c/2}(D)$ holds. 
		\end{enumerate}
		
\item[\rm (ii)] 
There is an embedding $\jmath\colon \QQ(\pi)\into E$ such that 
	\begin{enumerate}
		\item [\rm (a)] 
		if $D\simeq \MM_2(E)$, then $c$ is even and there is a 
			quaternion division algebra $D'$ over $E$ satisfying 
			$\ca{E}_1\otimes_{\QQ(\pi), \jmath}E \simeq \MM_{c/2}(D')$, 
		\item [\rm (b)] 
		if $D\not \simeq \MM_2(E)$, then either 
		 \begin{enumerate}
				\item[\rm 1.] 
				$\ca{E}_1\otimes_{\QQ(\pi), \jmath}E \simeq \MM_{c}(E)$ holds, or 
				\item[\rm 2.] 
				$c$ is even and there is a quaternion division algebra $D''$ over $E$
				satisfying $D''\not\simeq D$ and 
				$\ca{E}_1\otimes_{\QQ(\pi), \jmath}E \simeq \MM_{c/2}(D'')$.
		 \end{enumerate} 
	\end{enumerate} 
	
	\item[\rm (iii)] 
	There are a quadratic extension field $E'$ of $E$
	and an embedding $\jmath\colon \QQ(\pi) \into E'$ 
	such that $E'= E[\jmath(\pi)]$ and 
		\begin{enumerate}
			\item [\rm (a)] 
			if $D\otimes_{E}E' \simeq \MM_2(E')$, 
			then $\ca{E}_1\otimes_{\QQ(\pi),\jmath} E' \simeq \MM_{c}(E')$ holds, 
			\item [\rm (b)] 
			if $D\otimes_{E} E' \not\simeq \MM_2(E')$, then $c$ is even and 
		 $\ca{E}_1\otimes_{\QQ(\pi),\jmath} E' \simeq \MM_{c/2}(D\otimes_{E} E')$ 
		 holds. 
		\end{enumerate}
\end{enumerate}
Moreover, in the case {\rm (i)}, the integer $m_1$ is even.

\item
Assume $\ell=2$. 
Then there are simple abelian varieties $A_1,A_2$ over $k$ and integers
$m_1\geq 1,m_2\geq 1$ such that
\begin{itemize}
  \item
  $A_1\not\sim_k A_2$,
 	\item
 	$A \sim_kA_1^{m_1}\times A_2^{m_2}$, and
	\item 
	$\ca{E}_{i} = \End^0_k(A_i)$ is a central division algebra 
	over $\QQ(\pi_i)$ for $i=1,2$.
\end{itemize}
Let $c_i\geq 1$ be the integer such that 
$[\ca{E}_i:\QQ(\pi_i)]=c_i^2$ for $i=1,2$. 
In this case, $A$ has QM by $D$ if and only if both $A_1^{m_1}$ and $A_2^{m_2}$
have QM by $D$.
Moreover, for each $i$, $A_i^{m_i}$ has QM by $D$ if and only if 
\begin{enumerate}
\item[\rm (i)]
$\dim A_i^{m_i}=2g$, and
\item[\rm (ii)]
there is an embedding $\jmath_i\colon \QQ(\pi_i) \into E$ such that
\begin{enumerate}
 \item[\rm (a)] 
 if $D \simeq \MM_2(E)$, then
 $\ca{E}_i\otimes_{\QQ(\pi_i),\jmath_i} E \simeq \MM_{c_i}(E)$ holds,
 \item[\rm (b)] 
 if $D \not\simeq \MM_2(E)$, then
 $c_i$ is even and 
 $\ca{E}_i\otimes_{\QQ(\pi_i),\jmath_i} E \simeq \MM_{c_i/2}(D)$ holds.
\end{enumerate}
\end{enumerate}
\end{enumerate}
\end{thm}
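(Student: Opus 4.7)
The approach is to combine Tate's description of the endomorphism algebras $\ca{E}_i$ as central division algebras over the Frobenius fields $\QQ(\pi_i)$ with Yu's embedding criterion \cite[Theorem~1.2]{yu2012embeddings} for semi-simple $\QQ$-algebras. The proof divides into three stages.

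First I would bound $\ell$. Assume $A$ has QM by $D$. By Lemma~\ref{lem:QM}, each iso-simple factor $A_i^{m_i}$ admits an embedding $D \hookrightarrow \End^0_k(A_i^{m_i})$, so the rational Tate module of $A_i^{m_i}$ carries the structure of a faithful $D$-module. A simple $D$-module has $\QQ$-dimension $4g$ when $D \simeq \MM_2(E)$ and $8g$ when $D$ is a division algebra, so in either case $2\dim A_i^{m_i} \geq 4g$. Combined with $\sum_i \dim A_i^{m_i} = 4g$ this forces $\ell \in \{1,2\}$. Moreover, when $D$ is a division algebra the stronger bound $\dim A_i^{m_i} \geq 4g$ rules out $\ell = 2$, a fact that simplifies the third stage.

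Next I would handle the iso-simple case $\ell = 1$. Here $\ca{E} \simeq \MM_{m_1}(\ca{E}_1)$ is a central simple algebra over $Z := \QQ(\pi)$ of reduced degree $m_1 c$. Given an embedding $\imath \colon D \hookrightarrow \MM_{m_1}(\ca{E}_1)$, the image $\imath(E)$ is a subfield commuting with $Z$, so the compositum $L := \imath(E) \cdot Z$ is again a subfield, isomorphic to $\imath(E) \otimes_F Z$ with $F := \imath(E) \cap Z$. The three cases in the theorem correspond to the possibilities for $L$: cases (i) and (ii) share the situation $L = \imath(E)$, equivalently $Z$ embeds into $E$ via some $\jmath$, and they are separated by the Brauer class of the base change $\ca{E}_1 \otimes_{Z,\jmath} E$; case (iii) is the remaining situation in which $L = E'$ is a proper quadratic extension of $E$ of the form $E[\jmath(\pi)]$. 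In each case, the double centralizer theorem identifies the centralizer of $L$ in $\MM_{m_1}(\ca{E}_1)$ with $\MM_{m_1/[L:Z]}(\ca{E}_1 \otimes_Z L)$, and $\imath(D)$ lies inside this centralizer; Yu's criterion then translates the existence of $\imath$ into the Brauer-class compatibility between $D \otimes_E L$ and $\ca{E}_1 \otimes_Z L$ recorded under (a)/(b) of each case. Conversely, each such package of data builds an embedding $\imath$ by the same criterion. The parity claim $2 \mid m_1$ in case (i) is extracted from the Honda--Tate reduced-degree formula $2\dim A_1 = c [Z:\QQ]$ together with $\dim A = 4g$ and an index count in the Brauer matching.

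Finally, for $\ell = 2$, Lemma~\ref{lem:QM} reduces the question to each $A_i^{m_i}$ having QM by $D$. The dimension bound from the first stage together with $\dim A_1^{m_1} + \dim A_2^{m_2} = 4g$ forces $\dim A_i^{m_i} = 2g$, so in particular $D$ must be split and no proper compositum $L$ can appear, leaving only the analogue of case (i) from the $\ell = 1$ analysis for each factor. The main obstacle I anticipate is the subcase bookkeeping in the second stage: cleanly separating (i) from (ii) by the Brauer class of $\ca{E}_1 \otimes_{\jmath} E$, showing that no compositum $L$ with $[L:E] > 2$ can occur under the constraint $\dim A = 4g$, and pinning down the parity of $m_1$ in case (i). Once these structural points are in hand, the conditions in each subcase reduce to a routine comparison of Brauer invariants via Yu's criterion.
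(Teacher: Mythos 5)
Your first stage contains a substantive error that propagates through the rest of the argument, and it contradicts the statement you are trying to prove.

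You assert that when $D$ is a division algebra the rational Tate module of an iso-simple factor $A_i^{m_i}$ must have $\QQ$-dimension at least $8g$, hence $\dim A_i^{m_i}\geq 4g$, so that $\ell=2$ cannot occur for $D$ division. This is false, and the theorem itself tells you so: part (2)(ii)(b) explicitly treats the case $D\not\simeq\MM_2(E)$ with $\ell=2$ and $\dim A_i^{m_i}=2g$. The gap in your reasoning is that the Tate module $V_\ell(A_i^{m_i})$ is a $\QQ_\ell$-vector space, not a $\QQ$-vector space; it is a module over $D\otimes_\QQ\QQ_\ell$, which decomposes as $\prod_{\lambda\mid\ell}D\otimes_E E_\lambda$ and may be entirely split even when $D$ is division. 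The smallest faithful module for such a product can have $\QQ_\ell$-dimension as small as $\sum_\lambda 2[E_\lambda:\QQ_\ell]=4g$, exactly as in the split case. Thus one only gets $\dim A_i^{m_i}\geq 2g$ from this route, never $\geq 4g$ for $D$ division, and $\ell=2$ is possible whether or not $D$ splits. The paper instead derives the bound $\ell\leq 2$ from Yu's criterion applied to each iso-simple factor (Proposition~\ref{lem:smaller-dimension} combined with Lemma~\ref{lem:QM}), which yields the correct conclusion $\dim A_i^{m_i}=2g$ for factors of a QM abelian variety of dimension $<4g$ without any assumption on whether $D$ is split. Your stage-3 conclusion ``so in particular $D$ must be split'' is therefore wrong, and your proposed $\ell=2$ case would be missing the clause (2)(ii)(b) of the theorem.

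There is a second, smaller problem in stage 2: you take $L:=\imath(E)\cdot Z$ inside $\MM_{m_1}(\ca{E}_1)$ and assert that it is a subfield isomorphic to $\imath(E)\otimes_F Z$. In a matrix algebra the commutative subring generated by two commuting subfields is a quotient of their tensor product over $\QQ$ and need not be a domain at all; the paper's own analysis of $\QQ(\pi)\otimes_\QQ E$ as a product $\prod_i E_i$ makes this point, and in the configuration underlying case (i) the embedding $\imath$ can spread across two copies of $E$ (corresponding to the ``case~2'' of Yu's criterion in the paper's proof of Theorem~\ref{thm:end-ring-E[pi]=E}), so the relevant commutative subalgebra is $E\times E$ rather than a field. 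Your list of cases (i)--(iii) happens to match the theorem, but the structural claim used to derive it is not correct as stated; one really does need the explicit decomposition $\ca{E}_1\otimes_\QQ D^\circ\simeq\prod_i\MM_{n_i}(D_i)$ and the integer bookkeeping in Yu's criterion, as the paper carries out, to see that the possibilities are exactly those four combinatorial patterns.
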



This paper is organized as follows: 
As preliminaries, we review 
Tate's result on the structures of the endomorphism algebras
of abelian varieties over $\FF_q$
in Section \ref{sec:Tate},
and Yu's criterion of 
the existence of homomorphisms between semi-simple algebras
in Section \ref{sec:Yu}.
In Section \ref{sec:AV-of-small-dim}
(resp. Section \ref{sec:classification}),
we give an equivalent condition for an abelian variety over $\FF_q$
of dimension less than $4g$ (resp. of dimension $4g$)
to have QM by $D$.
In Section~\ref{subsec:totally-real}, 
we rephrase the equivalent condition for QM given in Sections
\ref{sec:AV-of-small-dim} and \ref{sec:classification},
under the assumption that the abelian variety is iso-simple,
$E$ is totally imaginary, and $\QQ(\pi)$ is totally real.
In Appendix~\ref{sec:abelian-surface}, 
we give an equivalent condition for an abelian surface over $\FF_q$ 
to have 
multiplication
by a quaternion algebra over $\QQ$, 
which is not necessarily indefinite or division. 


\section{Preliminaries}\label{sec:prelim}

\noindent
{\bf Notation and conventions}\\[-16pt]

\begin{itemize}
\item 
Let $g\geq 1$ be an integer. 
We fix a number field $E$ of degree $2g$ and a quaternion algebra 
$D$ over $E$. 
\item
Let $p$ be a prime and $r$ a positive integer.
Let 
$$q=p^r.$$
Let 
$$k=\FF_q$$ 
be the finite field of order $q$.   
\item 
For a number field $L$ and a place $w$ of $L$, 
let $L_w$ be the completion of $L$ at $w$.
\item 
Fix an algebraic closure $\ol{\QQ}$ of $\QQ$, and fix an embedding
$E\subseteq\ol{\QQ}$.
\item 
For a number field $L$, let $\Br(L)$ denote the Brauer group of $L$. 
For a central simple algebra $R$ over $L$, 
let $[R]$ denote the class of $R$ in $\Br(L)$.
For such $R$ and a place $w$ of $L$,
let $R^\circ$ be the opposite algebra of $R$,
and let $\inv_w(R)\in\QQ/\ZZ$
be the invariant of $R$ at $w$. 
\item
Let $|\cdot|$ be the standard absolute value on $\CC$.
\item
Let $\cD_p$ be the quaternion algebra over $\QQ$ which is ramified 
only at $p$ and $\infty$.
\item
Let $\cD'_p$ be the quaternion algebra over $\QQ(\sqrt{p})$ which is ramified 
only at the real places.
\end{itemize}

\subsection{Tate's classification of $\End_k^0(A)$}
\label{sec:Tate}
In this section, we review 
Tate's classification of the endomorphism algebra $\End_k^0(A)$.

\begin{df}[Weil $q$-number]
A Weil $q$-number is defined to be an algebraic integer $\Pi$ such 
that for every embedding $\tau \colon \QQ(\Pi) \into \CC$ of fields, we have 
\[
|\tau(\Pi)|=\sqrt{q}.
\]
We say that two Weil $q$-numbers $\Pi_1,\Pi_2$ 
are conjugate if there is an isomorphism
$\QQ(\Pi_1)\overset{\simeq}{\longrightarrow} \QQ(\Pi_2)$ 
of fields sending $\Pi_1$ to $\Pi_2$. 
\end{df}

For a simple abelian variety $A$ over $k=\FF_q$,
the Frobenius endomorphism $\pi=\pi_A$ is a Weil $q$-number
(see \cite[p.95]{tate1971classes}).
It is known that Weil $q$-numbers satisfy the following:  

\begin{prop}[{\cite[Proposition~4]{honda1968isogeny}}]
\label{prop:real-cm}
For any Weil $q$-number $\Pi$, the field $\QQ(\Pi)$ is
totally real or CM. 
\end{prop}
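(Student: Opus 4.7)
The plan is to construct an involution $\sigma$ on $\QQ(\Pi)$ that mimics complex conjugation, and then deduce the structure of $\QQ(\Pi)$ from its fixed field. The starting observation is that for any embedding $\tau\colon\QQ(\Pi)\into\CC$, the Weil condition $|\tau(\Pi)|=\sqrt{q}$ gives $\overline{\tau(\Pi)}=q/\tau(\Pi)$. In particular, the set of $\QQ$-conjugates of $\Pi$ in $\CC$ is stable under $\alpha\mapsto q/\alpha$, so the assignment $\Pi\mapsto q/\Pi$ extends to a well-defined $\QQ$-algebra endomorphism $\sigma$ of $\QQ(\Pi)$, which is an involution since $\sigma^2(\Pi)=\Pi$.

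If $\sigma=\id$, then $\Pi^2=q$, so $\QQ(\Pi)\subseteq\QQ(\sqrt{q})$, which is totally real, and we are done. Assume henceforth $\sigma\neq\id$ and let $F=\QQ(\Pi)^{\sigma}$, so that $[\QQ(\Pi):F]=2$. The key identity is that for every embedding $\tilde\tau\colon\QQ(\Pi)\into\CC$,
\[
\overline{\tilde\tau(\Pi)}=q/\tilde\tau(\Pi)=\tilde\tau(q/\Pi)=\tilde\tau(\sigma(\Pi)),
\]
so complex conjugation $c$ on $\CC$ satisfies $c\circ\tilde\tau=\tilde\tau\circ\sigma$ as maps $\QQ(\Pi)\to\CC$. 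Restricting to $F$, since $\sigma|_F=\id$, $c$ fixes $\tilde\tau(F)$, which shows $F$ is totally real. On all of $\QQ(\Pi)$, $\sigma\neq\id$ gives $c\circ\tilde\tau\neq\tilde\tau$, so no embedding of $\QQ(\Pi)$ lands in $\RR$; hence $\QQ(\Pi)$ is totally imaginary. Combined with $[\QQ(\Pi):F]=2$ and $F$ totally real, this makes $\QQ(\Pi)$ a CM field.

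The main obstacle is the well-definedness of $\sigma$: one has to verify that $q/\Pi$ satisfies the minimal polynomial of $\Pi$, which is the assertion that the set of complex roots of this polynomial is closed under $\alpha\mapsto q/\alpha$. This is exactly what the Weil condition delivers, since complex conjugation pairs up roots on the circle of radius $\sqrt{q}$ via $\alpha\leftrightarrow q/\alpha$. Once this structural input is in hand, the subsequent CM-field bookkeeping is essentially formal. Note that the argument uses only the archimedean Weil condition and is independent of any connection to abelian varieties.
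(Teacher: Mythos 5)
The paper does not prove this proposition; it is cited from Honda \cite{honda1968isogeny}, so there is no in-paper proof to compare against. Your argument is the standard one and is correct and complete: the Weil condition $|\tau(\Pi)|=\sqrt q$ forces every complex conjugate of a root of the minimal polynomial to equal $q/(\text{root})$, which makes $\Pi\mapsto q/\Pi$ a well-defined $\QQ$-algebra involution $\sigma$ of $\QQ(\Pi)$ intertwining every embedding $\tilde\tau$ with complex conjugation $c$ (i.e., $c\circ\tilde\tau=\tilde\tau\circ\sigma$); the case $\sigma=\id$ gives $\Pi^2=q$ and $\QQ(\Pi)\subseteq\QQ(\sqrt q)$ totally real, while $\sigma\neq\id$ makes $\QQ(\Pi)$ totally imaginary with totally real index-two fixed field $F=\QQ(\Pi)^\sigma$, hence CM.
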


\begin{remark}
\label{HT}
Weil, Honda and Tate showed that the map 
which associates a simple abelian variety $A$ over $\FF_q$ 
to $\pi_A$ 
gives a bijection from the set of $\FF_q$-isogeny classes 
of simple abelian varieties over $\FF_q$ 
to the set of conjugacy classes of Weil $q$-numbers
(see \cite[Th\'{e}or\`{e}me 1 (i)]{tate1971classes}).
\end{remark}
%
%


We use the following theorem on the structure 
of the endomorphism algebra
of a simple abelian variety over $\FF_q$:

\begin{thm}[{\cite[p.142]{tate1966endomorphisms}, 
\cite[Th\'{e}or\`{e}me 1 (ii)]{tate1971classes}}]
\label{thm:Tate2}
Let $A$ be a simple abelian variety over $k=\FF_q$.
Let $\ca{E}=\End_k^0 (A)$, and let
$L=\QQ(\pi)\subseteq\ca{E}$.
Then we have the following: 

\begin{enumerate}
\item[{\rm (1)}]
$\ca{E}$ is a central division algebra over $L$. 
\item[{\rm (2)}]
$2 \dim A = [\ca{E}:L]^\frac{1}{2}[L:\QQ]$.
\item[{\rm (3)}]
As for local behaviors of $\ca{E}$, we have:
\begin{enumerate}
\item[\rm (a)]
The invariants of $\ca{E}$ at all real places of $L$ are $1/2$. 
\item[\rm (b)]
For every place $w$ of $L$ above $p$, we have
\[
 \inv_w(\ca{E}) = \frac{w(\pi)}{w(q)} [L_w: \QQ_p] 
 =\dfrac{w(\pi)}{r}f_w\bmod{\ZZ}.
\]
Here, we regard $w$ as a valuation of $L$ satisfying
$w(L^\times)=\ZZ$,
and $f_w$ is the degree of the residue field of $w$ over $\FF_p$
(the finite field of order $p$).
\item[\rm (c)]
$\ca{E}$ splits at all finite places of $L$ not above $p$.  
\end{enumerate}

\noindent
\item[{\rm (4)}]
Let $c\geq 1$ be the least common denominator of the invariants of $\ca{E}$ 
for all places of $L$. 
Then we have $[\ca{E}\colon L]=c^2$. 
\end{enumerate}
\end{thm}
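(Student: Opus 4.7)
The plan is to prove parts (1) and (2) by combining Tate's isogeny theorem with a dimension count on Tate modules, and then to prove (3) and (4) by place-by-place local analysis followed by Brauer--Hasse--Noether. First, fix a prime $\ell\neq p$ and apply Tate's theorem on endomorphisms of abelian varieties over finite fields: the natural map
$$\End_k(A)\otimes\ZZ_\ell \longrightarrow \End_{\ZZ_\ell[\pi]}(T_\ell A)$$
is an isomorphism, so $\ca{E}\otimes_\QQ\QQ_\ell$ is identified with the commutant of $\QQ_\ell[\pi]$ in $\End_{\QQ_\ell}(V_\ell A)$. Since $A$ is simple, $\ca{E}$ is a division algebra with a field as centre; since $L=\QQ(\pi)$ already lies in that centre and $V_\ell A$ is faithful over $L\otimes_\QQ\QQ_\ell$, a dimension count forces the centre to be exactly $L$, proving (1). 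For (2), use semisimplicity of the action of $\pi$ on $V_\ell A$ (a consequence of Weil's Riemann hypothesis) to decompose $V_\ell A$ as a sum of copies of a single simple $L\otimes\QQ_\ell$-module; if the characteristic polynomial of $\pi$ is the $e$-th power of its minimal polynomial, then $2\dim A=e[L:\QQ]$ and $\dim_L \ca{E}=e^2$, giving $2\dim A=[\ca{E}:L]^{1/2}[L:\QQ]$.

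For the local invariants in (3), I would analyse $\ca{E}\otimes_L L_w$ place by place. At a finite place $w\nmid p$, choosing $\ell$ below $w$ and using the $\ell$-adic commutant description shows that $\ca{E}\otimes_L L_w$ is a matrix algebra over $L_w$, hence splits, yielding (c). At a real place of $L$, a polarization on $A$ induces a positive Rosati involution on $\ca{E}$; Albert's classification of division algebras with a positive involution, together with the degree formula (2), rules out everything except the Hamilton quaternion algebra, giving invariant $1/2$ and hence (a). For $w\mid p$, I would replace the $\ell$-adic Tate module by the rational covariant Dieudonn\'e module $M$ of the $p$-divisible group $A[p^\infty]$, a free module of rank $2\dim A$ over $K_0=W(\overline{\FF}_p)[1/p]$ carrying a $\sigma$-linear Frobenius $F$ with $F^r=\pi$; the $p$-adic analogue of Tate's isomorphism identifies $\ca{E}\otimes_\QQ\QQ_p$ with the commutant of $F$ on $M$. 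Applying the Dieudonn\'e--Manin slope decomposition to the $w$-isotypic summand $M_w$ under the action of $L\otimes_\QQ\QQ_p\simeq\prod_{w\mid p} L_w$ yields $\inv_w(\ca{E})\equiv w(\pi)f_w/r \pmod{\ZZ}$, which is (b).

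For (4), I would invoke the Brauer--Hasse--Noether theorem: the Schur index of a central simple algebra over a number field equals the least common multiple of its local Schur indices, each of which is the exact denominator of the corresponding local invariant. Since $\ca{E}$ is a division algebra, $[\ca{E}:L]^{1/2}$ equals its Schur index, whence $[\ca{E}:L]=c^2$ with $c$ the common denominator of the invariants of $\ca{E}$ over all places of $L$.

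The main obstacle is the $p$-adic computation (3)(b): relating the Newton slope data of the Dieudonn\'e module to the Brauer invariants at places above $p$ requires a careful isotypic decomposition of $M$ under the semisimple algebra $L\otimes_\QQ\QQ_p$ and a delicate bookkeeping of normalizations matching $w(\pi)$, $w(q)=r$, and the residue degree $f_w$. The remaining ingredients --- Tate's isogeny theorem, Rosati positivity at the archimedean places, and Brauer--Hasse--Noether --- are by now classical and present no essential difficulty once the setup is correct.
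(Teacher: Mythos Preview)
The paper does not prove this theorem at all: it is stated in Section~\ref{sec:Tate} purely as a review result, with citations to \cite{tate1966endomorphisms} and \cite{tate1971classes}, and is then used as a black box. So there is no ``paper's own proof'' to compare against; your proposal is in effect a sketch of Tate's original argument, and as such it is broadly on the right track --- the $\ell$-adic commutant description for (1), (2), and (3)(c), the Dieudonn\'e--Manin slope analysis for (3)(b), and Brauer--Hasse--Noether for (4) are exactly the classical ingredients.

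One step deserves more care: your argument for (3)(a). Saying that ``Albert's classification \ldots\ together with the degree formula (2) rules out everything except the Hamilton quaternion algebra'' is not quite enough. If $L$ has a real place then $L$ is totally real, and Albert's classification allows $\ca{E}$ to be of Type~I ($\ca{E}=L$), Type~II (an \emph{indefinite} quaternion algebra over $L$, split at all real places), or Type~III (a \emph{definite} quaternion algebra over $L$). Types~II and~III both have $[\ca{E}:L]=4$, so the degree formula (2) alone does not separate them, and Type~II would give invariant $0$ at the real places. To exclude Types~I and~II over a finite base field one needs an additional input --- for instance, the action of $\ca{E}\otimes_\QQ\RR$ on the first homology tensored with $\RR$, or equivalently the constraint coming from the signature of the Rosati form on the tangent space, which for abelian varieties over $\FF_q$ forces the definite case. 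Once that is supplied, the rest of your outline goes through.
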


%
Recall $q=p^r$.
When $A$ is simple and $\QQ(\pi)$ is totally real, 
as a corollary of Theorem~\ref{thm:Tate2}, 
we have $\dim A\leq 2$ and
the algebra $\End^0_k(A)$ is described more explicitly as follows: 

\begin{cor}
\label{cor:Tate-totally-real}
Let $A$ be a simple abelian variety over $k=\FF_q$, and let
$\ca{E}=\End_k^0 (A)$.
Assume that the field $\QQ(\pi)$ is totally real. 
Then we have the following: 
\begin{enumerate}[\upshape(1)]
	\item
	If $r$ is odd, then $A$ is an abelian surface, 
	$\QQ(\pi)\simeq \QQ(\sqrt{p})$, and 
	$\ca{E}\simeq \cD'_p$.
	\item
	If $r$ is even, then $A$ is an elliptic curve,  
	$\QQ(\pi)=\QQ$, and $\ca{E}\simeq \cD_p$.
\end{enumerate}
\end{cor}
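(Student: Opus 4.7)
The plan is to unwind Theorem~\ref{thm:Tate2} in the totally real case by determining $\pi$ (up to sign) explicitly, reading off the local invariants of $\ca{E}$, and identifying the resulting quaternion algebra.

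First I would pin down the field $L = \QQ(\pi)$. Since $\pi$ is a Weil $q$-number and every complex embedding of $L$ is real by hypothesis, each embedding $\tau \colon L\into\RR$ sends $\pi$ to a real number of absolute value $\sqrt{q}$; hence $\tau(\pi) = \pm\sqrt{q}$. If $r$ is even, then $\sqrt{q} = p^{r/2}\in\QQ$, so $\pi = \pm p^{r/2}$ and $L=\QQ$. If $r$ is odd, then $\sqrt{q} = p^{(r-1)/2}\sqrt{p}\notin\QQ$, so $\pi = \pm p^{(r-1)/2}\sqrt{p}$ and $L=\QQ(\sqrt{p})$.

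Next I would compute the local invariants of $\ca{E}$ via Theorem~\ref{thm:Tate2}(3). In both cases $\ca{E}$ splits at every finite place of $L$ not above $p$ by (3c), and every real place contributes invariant $1/2$ by (3a). For the places above $p$, I would use (3b). In the case $r$ even, $L=\QQ$ has a unique place above $p$, namely $w=v_p$, with $f_w=1$; since $w(\pi)=r/2$ and $w(q)=r$, (3b) gives $\inv_w(\ca{E})=1/2$. So $\ca{E}$ is ramified exactly at $\infty$ and $p$ with invariant $1/2$, which characterizes $\cD_p$. In the case $r$ odd, $p$ is ramified in $L=\QQ(\sqrt{p})$, with a unique place $w$ above $p$ satisfying $f_w=1$ and $w(p)=2$ (using the normalization $w(L^\times)=\ZZ$); from $\pi=\pm p^{(r-1)/2}\sqrt{p}$ I get $w(\pi)=r$ while $w(q)=2r$, and (3b) gives $\inv_w(\ca{E})= (r/r)\cdot 1 \equiv 0\bmod\ZZ$. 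Hence $\ca{E}$ is unramified above $p$ and ramified exactly at the two real places with invariant $1/2$, which characterizes $\cD'_p$.

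Finally I would use Theorem~\ref{thm:Tate2}(4) to get $c=2$ in each case (the lcm of denominators of $0,1/2,\dots$), so $[\ca{E}\colon L]=4$; plugging this into Theorem~\ref{thm:Tate2}(2) yields $2\dim A = 2\cdot[L\colon\QQ]$. Thus $\dim A = 1$ when $r$ is even and $\dim A = 2$ when $r$ is odd, completing both statements.

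The only delicate step is the computation of $\inv_w(\ca{E})$ above $p$ in the odd case: one must use the correct normalization of $w$ so that $w(L^\times)=\ZZ$, which forces $w(p)=2$ (ramification), and then see that the $\sqrt{p}$-contribution and the $p^{(r-1)/2}$-contribution combine to make $w(\pi)/r$ an integer. Everything else is bookkeeping with Theorem~\ref{thm:Tate2}.
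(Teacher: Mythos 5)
Your argument is correct and follows essentially the same route as the paper: pin down $\QQ(\pi)$ via $\tau(\pi)=\pm\sqrt{q}$, compute the local invariants from Theorem~\ref{thm:Tate2}(3) (including the normalization $w(L^\times)=\ZZ$ at the ramified place in the odd case), identify $\ca{E}$ as $\cD_p$ or $\cD'_p$, and read off $\dim A$ from parts (2) and (4). The only cosmetic difference is that you write out the explicit value $\pi=\pm p^{(r-1)/2}\sqrt{p}$ before computing $w(\pi)=r$, whereas the paper computes $v'(\pm\sqrt q)/v'(q)$ directly; both lead to $\inv_w(\ca{E})\equiv 0$ in the odd case.
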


\begin{proof}
Since $\pi$ is a Weil $q$-number,
we have
$|\tau(\pi)| = \sqrt{q}$
for any embedding
$\tau \colon \QQ(\pi)\into \CC$.
Then
$\tau(\pi)=\pm \sqrt{q}$
since $\QQ(\pi)$ is totally real.
This implies $\QQ(\pi) \simeq \QQ(\sqrt{q}) = \QQ(p^{r/2})$. 

Assume that $r$ is odd. 
Then $\QQ(\pi)\simeq\QQ(\sqrt{p})$. 
Since $p$ is ramified in $\QQ(\pi)\simeq \QQ(\sqrt{p})$, 
there is only one place (say $v'$) of $\QQ(\sqrt{p})$ dividing $p$. 
By Theorem~\ref{thm:Tate2}~(3), we have
\[
 \inv_v(\ca{E}) = 
 \begin{cases} 
 	1/2 & \text{if } v \text{ is a real place of } \QQ(\sqrt{p}), \\ 
 	 \frac{v(\pi)}{v(q)}[\QQ(\pi)_v :\QQ_p] 
 	 & \text{if } v=v',\\ 
 	0 & \text{otherwise.}
 \end{cases}
\]
Then
$$\inv_{v'}(\ca{E})=\dfrac{v'(\pm\sqrt{q})}{v'(q)}\cdot 2=1\equiv 0\bmod{\ZZ}.$$
Note that we can also compute $\inv_{v'}(\ca{E})$
by using
$\sum_v \inv_v(\ca{E}) = 0$.
By Theorem~\ref{thm:Tate2}~(4), we have 
$[\ca{E}:\QQ(\pi)]=4$.
Then $\ca{E}\simeq \cD'_p$.
By Theorem~\ref{thm:Tate2} (2), we have 
$$2\dim A = [\ca{E}:\QQ(\pi)]^{1/2}[\QQ(\pi):\QQ] = 4.$$
Hence $\dim A=2$.

Assume that $r$ is even. 
Then $\QQ(\pi) = \QQ$, and we have
\[
 \inv_v(\ca{E}) = \begin{cases} 
 	1/2 & \text{if } v \text{ is the infinite place of } \QQ, \\ 
 	\frac{v_p(\pi)}{v_p(q)} & \text{if $v$ is the $p$-adic valuation $v_p$ of $\QQ$,} \\ 
 	0 & \text{otherwise.}
 \end{cases}
\]
Then
$$\inv_{v_p}(\ca{E})=\dfrac{v_p(\pm\sqrt{q})}{v_p(q)}=\dfrac{1}{2}.$$
%
By Theorem~\ref{thm:Tate2}~(4), we have
$[\ca{E}:\QQ(\pi)]=4$.
Then $\ca{E}\simeq \cD_p$.
Thus
$$2\dim A = [\ca{E}:\QQ(\pi)]^{1/2}[\QQ(\pi):\QQ] = 2.$$
Hence $\dim A=1$.
\end{proof}

\subsection
{Homomorphisms between semi-simple algebras}
\label{sec:Yu}
To give an equivalent condition for an abelian variety over $\FF_q$
to have QM, we use the following criterion, 
which gives a necessary and sufficient condition 
for the existence of homomorphisms 
between semi-simple algebras:

\begin{thm}[{\cite[Theorem~1.2 (1)]{yu2012embeddings}}]
\label{thm:yu's_criterion}
Let $F$ be a field.
Let $\cA$ and $\cB$ be semi-simple $F$-algebras. 
We represent 
$$\cB\simeq\prod_{j=1}^s \End_{\Delta_j}(V_j),$$
where $\Delta_j$ is a division $F$-algebra 
and $V_j$ is a right $\Delta_j$-module ($j=1,\dots,s$).
%
For each $j$, we divide the maximal semi-simple quotient 
$(\Delta_j\otimes_F \cA^\circ)^{\mathrm{ss}}$ 
of $\Delta_j\otimes_F \cA^\circ$ into the product of simple factors: 
\[
 (\Delta_j\otimes_F \cA^\circ)^{\mathrm{ss}}
 \simeq\prod_{i=1}^{t_j}\MM_{m_{ji}}(D_{ji}), 
\] 
where $m_{ji}\geq 1$ is an integer and
$D_{ji}$ is a division $F$-algebra.
Then the set $\Hom_{F\text{\rm -alg}}(\cA,\cB)$ of $F$-algebra homomorphisms 
from $\cA$ to $\cB$ is non-empty 
if and only if there are non-negative integers 
$x_{ji}$ for $j=1,\dots,s$ and $i=1,\dots, t_j$ such that 
\begin{enumerate}
 \item[{\rm (I)}] 
 $\displaystyle\sum_{i = 1}^{t_j} x_{ji} = \dim_{\Delta_j} V_j$ \ for any $j$, and 
 \item[{\rm (II)}] 
 $\dfrac{m_{ji}[D_{ji}:F]}{[\Delta_j:F]}$ divides $x_{ji}$ for any $j,i$.
\end{enumerate}
\end{thm}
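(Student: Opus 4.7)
The plan is to pass from $A$ to its iso-simple factors via Lemma~\ref{lem:QM}, bound the number $\ell$ of factors by a dimension argument, and then apply Yu's criterion (Theorem~\ref{thm:yu's_criterion}) to each factor to produce the listed cases.

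First, by Lemma~\ref{lem:QM}, it suffices to study separately when each iso-simple factor $A_i^{m_i}$ admits QM by $D$. To bound $\ell$, I would establish the dimension inequality: if an abelian variety $B$ over $k$ has QM by $D$, then $\dim B\ge 2g$. Fixing a rational prime $\lambda\ne p$, the embedding $D\into\End^0_k(B)$ makes $V_\lambda B:=T_\lambda B\otimes\QQ_\lambda$ a faithful module over $D\otimes_\QQ\QQ_\lambda\simeq\prod_{w\mid\lambda}D_w$, where $w$ runs over places of $E$ above $\lambda$ and each $D_w$ is a quaternion algebra over $E_w$. Since a faithful $D_w$-module has $\QQ_\lambda$-dimension at least $2[E_w:\QQ_\lambda]$, summation gives $2\dim B=\dim_{\QQ_\lambda}V_\lambda B\ge 2[E:\QQ]=4g$. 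Combined with $\sum_i\dim A_i^{m_i}=4g$, this forces $\ell\le 2$, and in the case $\ell=2$ forces $\dim A_i^{m_i}=2g$ for each $i$.

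For an iso-simple factor $A_i^{m_i}$ I would then apply Yu's criterion with $F=\QQ$, $\cA=D$, and $\cB=\End^0_k(A_i^{m_i})\simeq\MM_{m_i}(\ca{E}_i)$, so that $s=1$, $\Delta_1=\ca{E}_i$, $V_1=\ca{E}_i^{m_i}$, and $\dim_{\Delta_1}V_1=m_i$. Setting $L_i=\QQ(\pi_i)$, the decomposition
\[
 \ca{E}_i\otimes_\QQ D^\circ
 \simeq\prod_{\jmath}\bigl(\ca{E}_i\otimes_{L_i,\jmath}E_\jmath\bigr)
 \otimes_{E_\jmath}\bigl(D^\circ\otimes_E E_\jmath\bigr),
\]
indexed by Galois-conjugacy classes of embeddings $\jmath\colon L_i\into\overline{\QQ}$ with $E_\jmath=E[\jmath(\pi_i)]$, expresses the tensor product as a product of central simple $E_\jmath$-algebras $\MM_{n_\jmath}(D_\jmath)$. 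Yu's criterion then says an embedding $D\into\MM_{m_i}(\ca{E}_i)$ exists iff there is a $\jmath$ for which the divisibility $\tfrac{n_\jmath[D_\jmath:\QQ]}{[\ca{E}_i:\QQ]}\mid m_i$ holds. The three cases (i)--(iii) correspond to the qualitatively distinct possibilities for such a $\jmath$: cases (i) and (ii) when $E_\jmath=E$ (so $\jmath\colon\QQ(\pi)\into E$), distinguished by whether the Brauer class of $D_\jmath$ agrees with $[D]$; case (iii) when $E_\jmath$ is a proper quadratic extension of $E$. Within each case, the subcases (a)/(b) are decided by whether $D$, respectively $D\otimes_E E_\jmath$, splits, and sub-subcase 2 of (ii)(b) records that the ambient summand may carry the Brauer class of a different quaternion division algebra $D''$.

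The $\ell=2$ case reduces to the $\ell=1$ analysis applied to each $A_i^{m_i}$; since $\dim A_i^{m_i}=2g=[E:\QQ]$, the $E$-action on $A_i^{m_i}$ has minimal rank, forcing $E_{\jmath_i}=E$ for the relevant $\jmath_i$, ruling out case (iii), and leaving exactly the conditions of case (i) for each factor. The parity assertion on $m_1$ in case (i) of $\ell=1$ will follow from $4g=m_1\dim A_1$ together with Tate's formula $2\dim A_1=c\cdot[\QQ(\pi):\QQ]$ and the divisibility $[\QQ(\pi):\QQ]\mid 2g$ produced by $\jmath\colon\QQ(\pi)\into E$. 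The main obstacle is the bookkeeping for Yu's divisibility in each of the seven sub-subcases (i)(a), (i)(b), (ii)(a), (ii)(b)1, (ii)(b)2, (iii)(a), (iii)(b): one must correctly compute the Brauer class of $\ca{E}_i\otimes_{L_i,\jmath}E_\jmath$ relative to $D\otimes_E E_\jmath$ over $E_\jmath$ and rewrite Yu's divisibility as the stated matrix-algebra isomorphism, using $[\ca{E}_i:\QQ]=c^2[\QQ(\pi):\QQ]$ and the Brauer-theoretic fact that two quaternion division algebras with equal invariants are isomorphic.
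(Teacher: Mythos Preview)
Your proposal does not address the stated theorem at all. The statement you were asked to prove is Theorem~\ref{thm:yu's_criterion}, Yu's criterion on the existence of $F$-algebra homomorphisms between semi-simple algebras. The paper does not prove this result; it is quoted verbatim from \cite[Theorem~1.2~(1)]{yu2012embeddings} and used as a black box. What you have written is instead a sketch of a proof of Theorem~\ref{thm:main}, the paper's main result on QM-abelian varieties, which \emph{applies} Yu's criterion. Nothing in your write-up establishes the biconditional in Theorem~\ref{thm:yu's_criterion} relating $\Hom_{F\text{-alg}}(\cA,\cB)\ne\emptyset$ to the existence of the integers $x_{ji}$.

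If your intent was to sketch Theorem~\ref{thm:main}, a few remarks. Your Tate-module argument for $\dim B\ge 2g$ is a genuine alternative to the paper's route (the paper deduces this bound from Proposition~\ref{lem:smaller-dimension}, itself an application of Yu's criterion), and it is cleaner for the purpose of bounding $\ell$. However, your summary ``Yu's criterion then says an embedding exists iff there is a $\jmath$ for which the divisibility holds'' is an oversimplification: the criterion demands nonnegative $x_\jmath$ summing to $m_i$, and the paper's case analysis (see the proof of Theorem~\ref{thm:end-ring-E[pi]=E}) explicitly allows two distinct $\jmath$'s to contribute. Your parity argument for $m_1$ in case~(i) is also incomplete: from $m_1=8g/(cd)$ and $d\mid 2g$ you only get $m_1=4(2g/d)/c$, which need not be even without further input. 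The paper obtains evenness from the integrality statement in Remark~\ref{rem:integer}~(2), namely that $n_1[D_1:\QQ]/[\ca{E}_1:\QQ]=4g/(cd)=m_1/2$ is already an integer.
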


\begin{remark}
\label{rem:integer}
In the above theorem, we have:
\begin{enumerate}[\upshape(1)]
\item
If $\cA$ is simple, then
any homomorphism in $\Hom_{F\text{\rm -alg}}(\cA,\cB)$
is injective.
\item
If
$(\Delta_j\otimes_F \cA^\circ)^{\mathrm{ss}} 
\simeq\prod_{i=1}^{t_j}\MM_{m_{ji}}(D_{ji})$, 
then the rational number
$\dfrac{m_{ji}[D_{ji}:F]}{[\Delta_j:F]}$ is a positive integer for any $j,i$
(see \cite[p.4]{yu2012embeddings}).
\end{enumerate}
\end{remark}

\section{A proof of Theorem~\ref{thm:main}}

Let $A=A_1$ be a simple abelian variety over $k=\FF_q$.
Let
$\ca{E}_1=\End_k^0(A_1)$.
To apply Theorem \ref{thm:yu's_criterion} later,
we decompose
$\ca{E}_1\otimes_{\QQ}D^\circ$
as explained below.

Let 
$$h(T)=h_{A_1}(T)\in\QQ[T]$$
be the minimal polynomial of $\pi=\pi_A=\pi_{A_1}$.
Note that $h(T)$ is monic, irreducible,
$h(T)\in\ZZ[T]$,
and
$$\QQ(\pi)\simeq\QQ[T]/(h(T)).$$
There is an integer $c\geq 1$ such that
$[\ca{E}_1:\QQ(\pi)]=c^2$.
Let $d=[\QQ(\pi):\QQ]$.
Since $\QQ$ is of characteristic zero, 
$h(T)$ has no multiple root in $\QQb$.
We have
\begin{align}
\label{eq:hA1}
h(T)=\prod_{\tau:\QQ(\pi)\into\QQb}(T-\tau(\pi)),
\end{align}
where $\tau$ runs through all embeddings of $\QQ(\pi)$ into $\QQb$.
Decompose
\begin{align}
\label{eq:hA1'}
h(T)=\prod_{i=1}^t h_i(T),
\end{align}
where
$h_i(T)\in E[T]$
is a monic irreducible polynomial for $i=1,\dots,t$.
Note that $h_i(T)\ne h_j(T)$ if $i\ne j$.
Let
$$E_i=E[T]/(h_i(T))$$
for $i=1,\dots,t$.
Then each $E_i$ is a field, and we have
\begin{align}
\label{eq:QpiE}
\QQ(\pi)\otimes_\QQ E \simeq \QQ[T]/(h(T))\otimes_\QQ E 
\simeq E[T]/\Bigl(\prod_{i=1}^t h_i(T)\Bigr)
\simeq \prod_{i=1}^t E[T]/(h_i(T)) 
=\prod_{i=1}^t E_i. 
\end{align}
For $i=1,\dots,t$,
let
\begin{align}
\label{eq:ji}
\jmath_i:\QQ(\pi)\into E_i
\end{align}
be the embedding of fields defined by
$\pi\mapsto T\bmod{h_i(T)}$.
Then
\begin{align}
\label{eq:EiEpi}
E_i=E[\jmath_i(\pi)]=E(\jmath_i(\pi)).
\end{align}
We decompose
\begin{align} 
\label{eq:E1Dcirc}
 \ca{E}_1 \otimes_\QQ D^\circ &\simeq 
 \ca{E}_1 \otimes_{\QQ(\pi)} (\QQ(\pi) \otimes_\QQ E) \otimes_E D^\circ \simeq 
 \ca{E}_1 \otimes_{\QQ(\pi)} 
 \left(\prod_{i =1}^t E_i \right) \otimes_E D^\circ \\  
 & \simeq  \prod_{i = 1}^t ((\ca{E}_1 \otimes_{\QQ(\pi),\jmath_i} E_i) \otimes_{E_i} 
 (E_i \otimes_E D^\circ)).  
\end{align} 
Since $\ca{E}_1 \otimes_{\QQ(\pi),\jmath_i} E_i$ and 
$E_i \otimes_E D^\circ$ are central simple algebras over $E_i$, 
so is the tensor product 
$(\ca{E}_1 \otimes_{\QQ(\pi),\jmath_i} E_i) \otimes_{E_i} (E_i \otimes_E D^\circ)$
for each $i$
by \cite[Section~3, Corollaire~1]{serre1951}. 
Then there are a positive integer $n_i$ and a central division algebra $D_i$
over $E_i$ satisfying
\begin{align}
\label{eq:M-ni-Di}
(\ca{E}_1 \otimes_{\QQ(\pi),\jmath_i} E_i) \otimes_{E_i} 
(E_i \otimes_E D^\circ) 
\simeq \MM_{n_i}(D_i)
\end{align}
for each $i$.
In this case, 
$\dfrac{n_i[D_i:\QQ]}{[\ca{E}_1:\QQ]}$ is a positive integer 
by Remark~\ref{rem:integer} (2).
Note that
\begin{align}
\label{eq:nD/E}
\frac{n_i[D_i:\QQ]}{[\ca{E}_1:\QQ]} = \frac{2g n_i[D_i:E]}{c^2d},
\end{align}
because 
$[D_i:\QQ] = [D_i:E][E:\QQ]=2g[D_i:E]$ 
and 
$[\ca{E}_1:\QQ] = [\ca{E}_1:\QQ(\pi)][\QQ(\pi):\QQ] = c^2d$.

\subsection{QM-abelian varieties of dimension less than $4g$}
\label{sec:AV-of-small-dim}

In this section, 
we give an equivalent condition for an abelian variety over $k=\FF_q$
of dimension less than $4g$ to have QM by $D$.

\begin{prop}
\label{lem:smaller-dimension} 
Let $A_1$ be a simple abelian variety over $k=\FF_q$,
and let $m_1\geq 1$ be an integer.
Let $A$ be an abelian variety over $k$ such that $A\sim_k A_1^{m_1}$.
Let $\ca{E}_1=\End_k^0(A_1)$, $\ca{E}=\End_k^0(A)$,
and $c=[\ca{E}_1:\QQ(\pi)]^{1/2}$.
Assume $\dim A<4g$.
\begin{enumerate}[\upshape(1)]
\item
$A$ has QM by $D$ if and only if 
$\dim A=2g$ and
there is an embedding 
$\jmath\colon \QQ(\pi) \into E$ satisfying 
\[
 [\ca{E}_1 \otimes_{\QQ(\pi), \jmath} E] = [D]   
\]
in $\Br(E)$.
\item
Assume that there is an embedding
$\jmath\colon \QQ(\pi) \into E$.
\begin{enumerate}
\item[\rm (a)] 
When $D\simeq \MM_2(E)$, we have
$[\ca{E}_1 \otimes_{\QQ(\pi), \jmath} E] = [D]$ in $\Br(E)$
if and only if
$\ca{E}_1\otimes_{\QQ(\pi),\jmath} E \simeq\MM_{c}(E)$ holds. 
\item[\rm (b)] 
When $D\not\simeq \MM_2(E)$, we have
$[\ca{E}_1 \otimes_{\QQ(\pi), \jmath} E] = [D]$ in $\Br(E)$
if and only if $c$ is even and 
$\ca{E}_1\otimes_{\QQ(\pi),\jmath} E \simeq\MM_{c/2}(D)$ holds. 
\end{enumerate}
\end{enumerate}
\end{prop}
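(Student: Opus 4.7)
The plan is to apply Yu's criterion (Theorem~\ref{thm:yu's_criterion}) with $F=\QQ$, $\cA=D$, and $\cB=\ca{E}\simeq\MM_{m_1}(\ca{E}_1)$. Since $\ca{E}_1$ is a central division algebra over $\QQ(\pi)$ by Tate's theorem (Theorem~\ref{thm:Tate2}), I may take $s=1$, $\Delta_1=\ca{E}_1$, and $V_1=\ca{E}_1^{m_1}$, so $\dim_{\Delta_1}V_1=m_1$. The needed decomposition of $(\ca{E}_1\otimes_\QQ D^\circ)^{\mathrm{ss}}$ is already furnished by \eqref{eq:E1Dcirc}--\eqref{eq:M-ni-Di}: this tensor product is automatically semi-simple, being a product of central simple algebras. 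Yu's criterion then reads: $A$ has QM by $D$ if and only if there exist non-negative integers $x_1,\dots,x_t$ with $\sum x_i=m_1$ and $e_i\mid x_i$ for every $i$, where $e_i:=n_i[D_i:\QQ]/[\ca{E}_1:\QQ]$.

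Next I will simplify $e_i$. Comparing $E_i$-dimensions in \eqref{eq:M-ni-Di} gives $n_i^2[D_i:E_i]=4c^2$, which combined with \eqref{eq:nD/E} yields the clean formula $e_i=\frac{8g\,[E_i:E]}{n_i d}$. The crux of part~(1) is the following dimensional bookkeeping under $\dim A<4g$, i.e., $m_1 cd<8g$. For any contributing index (one with $x_i>0$), we have $e_i\leq m_1<8g/(cd)$. Since $n_i\leq 2c$ (from $n_i^2[D_i:E_i]=4c^2$), this inequality forces first $[E_i:E]=1$, so $E_i=E$ and $\jmath_i$ lands in $E$, then $n_i=2c$ and $D_i=E$, so $e_i=4g/(cd)$. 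Hence every contributing $x_i$ is a positive multiple of $4g/(cd)$, and $\sum x_i=m_1<2\cdot 4g/(cd)$ admits exactly one contributing index $i$ with $x_i=m_1$, forcing $m_1=4g/(cd)$ and $\dim A=2g$. Setting $\jmath=\jmath_i$, the conclusion $(E_i,D_i)=(E,E)$ and $n_i=2c$ translates via \eqref{eq:M-ni-Di} into $[\ca{E}_1\otimes_{\QQ(\pi),\jmath}E]=[D]$ in $\Br(E)$. The converse is immediate: given such $\jmath$, the index $i$ with $\jmath_i=\jmath$ has $e_i=4g/(cd)=m_1$, so $x_i=m_1$ and $x_{i'}=0$ otherwise satisfies Yu's criterion.

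For part~(2), I would invoke Wedderburn: $\ca{F}:=\ca{E}_1\otimes_{\QQ(\pi),\jmath}E$ is central simple over $E$ of $E$-dimension $c^2$, so $\ca{F}\simeq\MM_a(D')$ for a unique division algebra $D'/E$ with $[D']=[\ca{F}]$ in $\Br(E)$ and $a^2[D':E]=c^2$. In case~(a), $[D]=0$ forces $D'=E$ and $a=c$. In case~(b), $D$ is a quaternion division algebra, so $[D]\neq 0$; then $D'\simeq D$ as $D$ is the unique division algebra in its Brauer class, and $a^2\cdot 4=c^2$ gives $c$ even and $a=c/2$. Both converse implications are immediate from passage to Brauer classes.

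The hard part is the dimensional bookkeeping in part~(1): verifying that the strict inequality $\dim A<4g$ is precisely tight enough to force $E_i=E$, $D_i=E$, and uniqueness of the contributing index---each of which would fail if $\dim A=4g$ were permitted. The remainder of the argument is routine Wedderburn/Brauer computation.
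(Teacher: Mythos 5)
Your proof is correct and follows essentially the same route as the paper: Yu's criterion is applied with the identical choice of data $(F,\cA,\cB,\Delta_1,V_1)$, and the dimensional bookkeeping that isolates the unique contributing index is the same argument as in the paper (which arrives at $\sum_i[D_i:E_i]^{1/2}[E_i:E]k_i<2$), merely repackaged via your slightly cleaner formula $e_i=8g[E_i:E]/(n_id)$; part (2) is the same Wedderburn/Brauer-class computation.
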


\begin{proof}
(1)
Recall $\pi=\pi_A=\pi_{A_1}$
as in \eqref{eq:pi}.
Let $d = [\QQ(\pi):\QQ]$.
By assumption, we have
$4g>\dim A=m_1\dim A_1$.
Then by Theorem~\ref{thm:Tate2} (2),
we have 
\begin{align}
\label{eq:cd-less-than-8g-m1}
 \frac{8g}{m_1} > 2\dim A_1 = [\ca{E}_1:\QQ(\pi)]^{1/2} [\QQ(\pi):\QQ] = cd. 
\end{align}
This implies
\begin{align}
\label{eq:8g-cd-larger-than-m1}
\frac{8g}{cd} > m_1. 
\end{align}
In the isomorphisms \eqref{eq:E1Dcirc} and \eqref{eq:M-ni-Di},
apply Theorem~\ref{thm:yu's_criterion} to 
$F=\QQ$, $\cA = D$, $s=1$, $\Delta_1 = \ca{E}_1$, $t_1 = t$, 
$V_1=\ca{E}_1^{m_1}$,
$\cB=\ca{E}\simeq\MM_{m_1}(\ca{E}_1)\simeq\End_{\Delta_1}(V_1)$,
$m_{1i}=n_i$,
$D_{1i}=D_i$.
Here, the isomorphism
$\MM_{m_1}(\ca{E}_1)\simeq\End_{\Delta_1}(V_1)$
holds since $V_1$ is a right $\Delta_1$-module
(see \cite[Theorem 7.2 (ii)]{Reiner}).
Then there is a ring homomorphism
$\imath\colon D \rightarrow \ca{E}$
if and only if
there are non-negative integers $x_1,\dots,x_t$
such that 
\begin{enumerate}
 \item[(I)] 
 $\displaystyle\sum_{i=1}^t x_i = m_1$, and
 \item[(II)] 
 $\displaystyle\frac{n_i[D_i:\QQ]}{[\ca{E}_1:\QQ]}$ divides $x_i$ for $i = 1, \dots, t$.  
\end{enumerate}

Assume that $A$ has QM by $D$, i.e.,
there is a ring homomorphism
$\imath\colon D \rightarrow \ca{E}$.
Then there are non-negative integers $x_1,\dots,x_t$
satisfying (I), (II).
By (II) and \eqref{eq:nD/E}, there are non-negative integers 
$k_1,\dots,k_t$
such that
\begin{align}
 x_i = \frac{2g n_i[D_i:E]}{c^2d}k_i \label{eq:relation-x-and-k}
\end{align}
for $i  =1, \dots, t$. 
By (I), \eqref{eq:8g-cd-larger-than-m1} and \eqref{eq:relation-x-and-k}, 
we have 
\begin{align}
 \frac{8g}{cd}>m_1 = \sum_{i=1}^t x_i = 
 \frac{2g}{c^2 d}\sum_{i=1}^t n_i[D_i:E]k_i. 
\end{align}
Then
\begin{align}
\label{eq:ineq-8g-cd-etc}
 4c > \sum_{i=1}^t n_i [D_i:E] k_i.
\end{align} 
On the other hand, by dimension counting for \eqref{eq:M-ni-Di}
over $E_i$, we have $4 c^2 = n_i^2 [D_i:E_i]$.
Then
\begin{align}
\label{eq:ni-c-Di-Ei} 
 n_i = \frac{2c}{[D_i:E_i]^{1/2}}. 
\end{align}
By \eqref{eq:ineq-8g-cd-etc} and \eqref{eq:ni-c-Di-Ei}, 
we have 
\[
 4c > \sum_{i = 1}^t n_i [D_i:E] k_i = 
 \sum_{i = 1}^t \frac{2c}{[D_i:E_i]^{1/2}} [D_i:E] k_i
 = 2c \sum_{i = 1}^t [D_i:E_i]^{1/2}[E_i:E] k_i. 
\]
Then
\begin{align}
\label{eq:ineq-Di-Ei-ki}
 2 > \sum_{i = 1}^t [D_i:E_i]^{1/2}[E_i:E] k_i. 
\end{align} 
If $\sum_{i = 1}^t [D_i:E_i]^{1/2}[E_i:E] k_i=0$,
then
$k_i=0$ holds for any $i$. 
This implies that $x_i = 0$ for any $i$.
This contradicts the condition (I): 
$\sum_{i=1}^t x_i =m_1\geq 1$.
Note that $[D_i:E_i]^{1/2}$ is a positive integer for each $i$
since $D_i$ is a central division algebra over $E_i$.
Then we have
$$\sum_{i = 1}^t [D_i:E_i]^{1/2}[E_i:E] k_i = 1.$$
This yields that 
$[D_i:E_i]=[E_i:E]=k_i=1$
for one $i$, and $k_j = 0$ 
for any $j\neq i$. 
Then $D_i = E_i = E$,
and the map
$\jmath_i:\QQ(\pi)\hookrightarrow E_i=E$
in \eqref{eq:ji}
is an embedding of $\QQ(\pi)$ into $E$.
By \eqref{eq:ni-c-Di-Ei}, we have
$n_i=2c$.
Then by \eqref{eq:M-ni-Di}, we have
\begin{align}
\label{eq:one-factor-of-product-quadratic}
(\ca{E}_1 \otimes_{\QQ(\pi), \jmath_i} E) \otimes_{E} D^\circ \simeq \MM_{2c}(E). 
\end{align}
Hence
\[
 [\ca{E}_1 \otimes_{\QQ(\pi), \jmath_i} E] = [D]
\]
holds in $\Br(E)$. 
By \eqref{eq:relation-x-and-k}, we have
$x_i=\dfrac{2g\cdot 2c\cdot 1}{c^2 d}\cdot 1=\dfrac{4g}{cd}$
and
$x_j = 0$ for any $j\neq i$. 
Then by (I), 
we have $m_1=x_i=4g/cd$. 
By \eqref{eq:cd-less-than-8g-m1}, we have 
$2\dim A_1=cd$.
Then
\[
 \dim A=m_1 \dim A_1=\frac{4g}{cd}\cdot \frac{cd}{2} = 2g. 
\]

Conversely, assume that $\dim A= 2g$ 
and that there is an embedding 
$\jmath\colon \QQ(\pi)\into E$ 
satisfying $[\ca{E}_1 \otimes_{\QQ(\pi), \jmath} E] = [D]$ 
in $\Br(E)$. 
Considering the composite map
$\QQ(\pi)\overset{\jmath}{\hookrightarrow}E\subseteq\QQb$,
we observe that
$T-\jmath(\pi)\in E[T]$
divides the polynomial
$$\displaystyle h(T)=\prod_{\tau:\QQ(\pi)\hookrightarrow\QQb}
(T-\tau(\pi))\in\QQ[T]\subseteq E[T]$$
in \eqref{eq:hA1}.
We have a monic irreducible decomposition
$\displaystyle h(T)=\prod_{i=1}^t h_i(T)$
in $E[T]$, where $h_i(T)\ne h_j(T)$ if $i\ne j$
(see \eqref{eq:hA1'}).
Then there is a unique
$i\in\Set{1,\dots,t}$
such that
$T-\jmath(\pi)=h_i(T)$.
Hence the $E$-algebra homomorphism
$E[T]\rightarrow E$
defined by
$T\mapsto\jmath(\pi)$
induces an isomorphism
$$\phi_i: E_i=E[T]/(h_i(T))=E[T]/(T-\jmath(\pi))
\overset{\simeq}{\longrightarrow} E$$
of fields over $E$.
Moreover, the following diagram is commutative:
\begin{equation*}
\begin{CD}
\QQ(\pi)@>\text{$\jmath_i$}>> E_i\\
@V\text{$\rm id$}VV@VV\text{$\phi_i$}V\\
\QQ(\pi)@>\text{$\jmath$}>> E.
\end{CD}
\end{equation*}
We may assume $i=1$
and identify $E_1=E$ by $\phi_1$.
Then 
$\jmath_1:\QQ(\pi)\hookrightarrow E_1$
is identified with
$\jmath:\QQ(\pi)\hookrightarrow E$.
Since
$[\ca{E}_1 \otimes_{\QQ(\pi), \jmath} E]-[D]=0$,
we have
$(\ca{E}_1\otimes_{\QQ(\pi), \jmath}E)\otimes_E D^\circ
\simeq \MM_n(E)$
for some $n\geq 1$.
By dimension counting over $E$, we have
$4c^2=n^2$.
Then $n=2c$, and so
$$(\ca{E}_1\otimes_{\QQ(\pi),\jmath_1}E_1)\otimes_{E_1}
(E_1\otimes_{E}D^\circ)
\simeq \MM_{2c}(E).$$
Comparing this with \eqref{eq:M-ni-Di},
we have
$n_1=2c$ and
$D_1\simeq E=E_1$.
We set 
$$k_1 = 1,\qquad k_i=0 \quad (i=2,\dots, t),$$
and 
\[
 x_i = \frac{2gn_i[D_i:E]}{c^2d}k_i \qquad (i=1,\dots, t).
\]
Then
$x_1=\dfrac{2g\cdot 2c\cdot 1}{c^2 d}\cdot 1=\dfrac{4g}{cd}$
and
$x_i=0$ for $i=2,\dots, t$.
Since $2\dim A_1 = cd$ 
and $\dim A=m_1 \dim A_1=2g$, 
we have 
$x_1 =\dfrac{4g}{cd}=\dfrac{2m_1\dim A_1}{2\dim A_1}=m_1$. 
Then the condition (I) is satisfied.
By \eqref{eq:nD/E}, we have
$x_i=\dfrac{n_i[D_i:\QQ]}{[\ca{E}_1:\QQ]}k_i$ for $i=1,\dots,t$.
Then the condition (II) is also satisfied.
Therefore $A$ has QM by $D$. 

(2)
Assume
$[\ca{E}_1 \otimes_{\QQ(\pi), \jmath} E] = [D]$ in $\Br(E)$.

(a)
If $D\simeq\MM_2(E)$, then
$\ca{E}_1 \otimes_{\QQ(\pi), \jmath} E\simeq\MM_n(E)$
for some $n\geq 1$.
By dimension counting over $E$, we have $c^2=n^2$.
Then $c=n$ and
$\ca{E}_1 \otimes_{\QQ(\pi), \jmath} E\simeq\MM_c(E)$.

(b)
If $D\not\simeq\MM_2(E)$, then
$\ca{E}_1 \otimes_{\QQ(\pi), \jmath} E\simeq\MM_n(D)$
for some $n\geq 1$.
By dimension counting over $E$, we have $c^2=4n^2$.
Then $c=2n$, which is even.
Moreover, we have
$\ca{E}_1 \otimes_{\QQ(\pi), \jmath} E\simeq\MM_{c/2}(D)$.

The converse is obvious.
\end{proof}

\begin{cor}
\label{is2g}
Let $A$ be an abelian variety over $k=\FF_q$.
Assume $\dim A<4g$.
If $A$ has QM by $D$, then $A$ is iso-simple over $k$
and $\dim A=2g$.
\end{cor}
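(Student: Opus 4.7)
My plan is to reduce the corollary to the iso-simple case already handled in Proposition~\ref{lem:smaller-dimension}, by using the decomposition \eqref{eq:A_prod_Ai} together with Lemma~\ref{lem:QM}.

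First I would write $A\sim_k \prod_{i=1}^{\ell} A_i^{m_i}$ as in \eqref{eq:A_prod_Ai}. By Lemma~\ref{lem:QM}, the assumption that $A$ has QM by $D$ forces each iso-simple factor $A_i^{m_i}$ to have QM by $D$. Next, observe that for every $i$,
\[
\dim A_i^{m_i} \leq \dim A < 4g,
\]
so each $A_i^{m_i}$ satisfies the dimension hypothesis of Proposition~\ref{lem:smaller-dimension}.

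I would then apply Proposition~\ref{lem:smaller-dimension}~(1) to $A_i^{m_i}$: since it has QM by $D$ and its dimension is less than $4g$, the proposition yields $\dim A_i^{m_i} = 2g$ for each $i \in \{1,\dots,\ell\}$. Summing the dimensions of the iso-simple factors gives
\[
\dim A = \sum_{i=1}^{\ell} \dim A_i^{m_i} = 2g\,\ell.
\]

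Finally, combining this identity with the hypothesis $\dim A < 4g$ yields $2g\ell < 4g$, hence $\ell < 2$, which forces $\ell = 1$. Thus $A$ is iso-simple over $k$ and $\dim A = 2g$. The argument is essentially bookkeeping; the only subtlety is remembering to invoke Lemma~\ref{lem:QM} to pass QM from $A$ to each factor so that Proposition~\ref{lem:smaller-dimension} becomes applicable, and I expect no serious obstacle beyond that.
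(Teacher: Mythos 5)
Your proof is correct and takes essentially the same approach as the paper: decompose $A$ into its iso-simple factors, push QM down to each factor via Lemma~\ref{lem:QM}, apply Proposition~\ref{lem:smaller-dimension}~(1) to conclude each factor has dimension $2g$, and count. The only cosmetic difference is that you compute $\dim A = 2g\ell$ directly while the paper argues by contradiction with two factors, but the logic is identical.
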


\begin{proof}
Suppose that $A$ has QM by $D$.
Assume that $A$ is not iso-simple over $k$.
Then $A$ has at least two iso-simple factors $A',A''$ over $k$
such that
$0<\dim A'<\dim A$
and
$0<\dim A''<\dim A$.
Since $A$ has QM by $D$,
$A'$ and $A''$ also have QM by $D$
by Lemma \ref{lem:QM}.
Since $A'$ is iso-simple over $k$ and
$\dim A'<\dim A<4g$,
we have
$\dim A'=2g$
by Proposition \ref{lem:smaller-dimension} (1).
By the same argument, we have $\dim A''=2g$.
Then
$$\dim A\geq \dim A'+\dim A''=4g,$$
which contradicts the assumption $\dim A<4g$.
Therefore $A$ is iso-simple over $k$.
By Proposition \ref{lem:smaller-dimension} (1) again,
we have $\dim A=2g$.
\end{proof}

\subsection{QM-abelian varieties of dimension $4g$}
\label{sec:classification}

In this section, we give an equivalent condition for an abelian variety
over $k = \FF_q$ of dimension $4g$ to have QM by $D$,
and complete the proof of Theorem \ref{thm:main}.

\begin{lem}
\label{lem:l-at-most-2}
Assume that an abelian variety $A$ of dimension $4g$ over $k=\FF_q$ 
has QM by $D$.
Then $A$ has at most two iso-simple factors over $k$. 
\end{lem}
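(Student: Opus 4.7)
The plan is to derive the bound $\ell \le 2$ by reducing to the small-dimension case already handled by Corollary~\ref{is2g}. Decompose $A \sim_k \prod_{i=1}^{\ell} A_i^{m_i}$ as in \eqref{eq:A_prod_Ai}. Since $A$ has QM by $D$, Lemma~\ref{lem:QM} gives that every iso-simple factor $A_i^{m_i}$ has QM by $D$ as well; in particular each $A_i^{m_i}$ is itself an abelian variety over $k$ which is iso-simple and QM by $D$.

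Next I would argue dimensions. From $\dim A = 4g$ and $\dim A = \sum_{i=1}^{\ell} m_i \dim A_i$ with each summand positive, if $\ell \geq 2$ then every iso-simple factor satisfies $\dim A_i^{m_i} < 4g$. Applying Corollary~\ref{is2g} to each $A_i^{m_i}$ then forces $\dim A_i^{m_i} = 2g$ for every $i$. Summing gives $4g = \dim A = 2g \cdot \ell$, i.e.\ $\ell = 2$. Thus either $\ell = 1$ (and $A$ is already iso-simple) or $\ell = 2$, which is the desired bound.

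The main (small) obstacle is ensuring that Corollary~\ref{is2g} actually applies to each $A_i^{m_i}$ individually: one must observe that $A_i^{m_i}$, being an abelian variety over $k$ of dimension strictly less than $4g$ and admitting QM by $D$ (via the $i$-th projection of the embedding $D\hookrightarrow\ca{E}$), meets the hypotheses of that corollary verbatim. Once this verification is made, the conclusion $\ell \leq 2$ is immediate from the dimension count above, and no further computation is needed.
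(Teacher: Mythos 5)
Your proof is correct and follows essentially the same route as the paper: decompose $A$ into iso-simple factors, transfer QM to each factor via Lemma~\ref{lem:QM}, force each factor to have dimension $2g$ by the small-dimension result, and conclude from the dimension count. The only cosmetic differences are that you invoke Corollary~\ref{is2g} (which is itself a consequence of Proposition~\ref{lem:smaller-dimension}~(1), the result the paper cites directly) and that you argue from $\ell\geq 2$ rather than deriving a contradiction from $\ell\geq 3$; neither changes the substance.
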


\begin{proof}
Decompose 
$A\sim_k \prod_{i=1}^\ell A_i^{m_i}$
as in \eqref{eq:A_prod_Ai}.
Assume $\ell\geq 3$.
Then for each $i$, we have
$\dim A_i^{m_i}<4g$.
Moreover, $A_i^{m_i}$ has QM by $D$ by Lemma \ref{lem:QM}.
Then by Proposition~\ref{lem:smaller-dimension} (1), 
we have
$\dim A_i^{m_i}=2g$
for each $i$.
This implies 
$4g=\dim A=2g\ell\geq 6g$,
which is a contradiction.
\end{proof}

For an iso-simple abelian variety over $k$ of dimension $4g$,
we have the following:

\begin{thm}
\label{thm:end-ring-E[pi]=E} 
Let $A_1$ be a simple abelian variety over $k=\FF_q$,
and let $m_1\geq 1$ be an integer.
Let $A$ be an abelian variety over $k$
such that $A\sim_k A_1^{m_1}$.
Assume $\dim A=4g$.
Then $A$ has QM by $D$ if and only if one of the following holds: 
\begin{enumerate}
	\item[\rm (i)]
	There is an embedding $\jmath\colon \QQ(\pi)\into E$ satisfying
	\[
	 [\ca{E}_1 \otimes_{\QQ(\pi), \jmath}E] = [D]
	\] 
	in $\Br(E)$. 
	\item[\rm (ii)]
	There are an embedding $\jmath\colon \QQ(\pi)\into E$ 
	and a quaternion division algebra $D'$ over $E$ satisfying 
	\[
	 [\ca{E}_1 \otimes_{\QQ(\pi), \jmath}E] - [D] = [D']
	\]
  in $\Br(E)$. 
	\item[\rm (iii)]
	There are a quadratic extension field $E'$ of $E$ 
	and an embedding $\jmath\colon \QQ(\pi) \into E'$ 
	satisfying $E'= E[\jmath(\pi)]$ and 
	\[
	 [\ca{E}_1\otimes_{\QQ(\pi),\jmath}E']=[D\otimes_{E}E'] 
	\]
	 in $\Br(E')$. 
\end{enumerate}
Moreover, in the case {\rm (i)}, the integer $m_1$ is even.
\end{thm}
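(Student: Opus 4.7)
The plan is to adapt the Yu's-criterion strategy from the proof of Proposition \ref{lem:smaller-dimension} to the dimension-$4g$ case. The key quantitative shift is that $\dim A=4g$ together with $2\dim A_1=cd$ forces $m_1=8g/(cd)$, changing the numerical target from $1$ to $2$ and thereby producing the three essentially distinct solutions of the theorem instead of one.

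More precisely, I apply Theorem \ref{thm:yu's_criterion} with $F=\QQ$, $\cA=D$, $\cB=\ca{E}\simeq\MM_{m_1}(\ca{E}_1)$, $\Delta_1=\ca{E}_1$, $V_1=\ca{E}_1^{m_1}$, and the decomposition \eqref{eq:E1Dcirc}--\eqref{eq:M-ni-Di}. Writing $x_i=\frac{2g n_i[D_i:E]}{c^2d}k_i$ with $k_i\in\ZZ_{\geq 0}$ (encoding (II)), and using $n_i=2c/[D_i:E_i]^{1/2}$ together with $m_1=8g/(cd)$, condition (I) reduces to
\[
\sum_{i=1}^t [D_i:E_i]^{1/2}\,[E_i:E]\,k_i \;=\; 2.
\]

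I then enumerate non-negative integer solutions. Since each nonzero summand is at least $1$, either a single index contributes $2$, or two distinct indices each contribute $1$. The single-index possibilities split according to which factor carries the $2$: (a) $[D_i:E_i]=4$, $[E_i:E]=k_i=1$, giving $E_i=E$ and a quaternion division algebra $D_i$ over $E$ (case (ii) of the theorem); (b) $[E_i:E]=2$, $[D_i:E_i]=k_i=1$, giving $D_i=E_i$ a quadratic extension of $E$ (case (iii), with $E'=E[\jmath(\pi)]$ via \eqref{eq:EiEpi}); (c) $k_i=2$, $[D_i:E_i]=[E_i:E]=1$, giving $D_i=E_i=E$ (case (i)). The two-index possibility forces $D_i=E_i=E$ and $D_j=E_j=E$ with $k_i=k_j=1$, which also falls under case (i) using either embedding. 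For each solution, \eqref{eq:M-ni-Di} yields $[D_i]=[\ca{E}_1\otimes_{\QQ(\pi),\jmath_i}E_i]-[D\otimes_E E_i]$, giving exactly the three Brauer-class identities in the statement. The converse mirrors Proposition \ref{lem:smaller-dimension}: starting from a given embedding satisfying one of (i)--(iii), I identify the appropriate $h_i(T)$ with the minimal polynomial of $\jmath(\pi)$ over $E$ (or over $E'$ in case (iii)), set the corresponding $k_i$ with $k_j=0$ for $j\neq i$, and verify (I) and (II).

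For the ``moreover'' claim: in case (i) every contributing index has $D_i=E_i=E$ and $n_i=2c$, so the step $\frac{n_i[D_i:\QQ]}{[\ca{E}_1:\QQ]}=\frac{4g}{cd}$ is a positive integer by Remark \ref{rem:integer} (2), and the contributing $k_i$'s sum to $2$, whence $m_1=\sum_i x_i = 2\cdot\frac{4g}{cd}$ is even. The main obstacle is the enumeration: unlike in Proposition \ref{lem:smaller-dimension}, where the target was $1$ with a unique solution, the target $2$ admits a potential fourth type of solution (two distinct indices, both with $D_i=E_i=E$), and the crux is to verify that it collapses into case (i) rather than producing a genuinely new fourth case.
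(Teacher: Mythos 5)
Your proposal is correct and follows essentially the same route as the paper: the same application of Yu's criterion with $F=\QQ$, $\cA=D$, $\cB\simeq\MM_{m_1}(\ca{E}_1)$, the same reduction via $m_1=8g/(cd)$ and $n_i=2c/[D_i:E_i]^{1/2}$ to the Diophantine condition $\sum_i [D_i:E_i]^{1/2}[E_i:E]k_i=2$, the same four-way enumeration (including the two-index solution, which you correctly observe collapses into case (i)), and the same integrality argument from Remark \ref{rem:integer}(2) for the evenness of $m_1$.
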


\begin{proof}
We observe that
$\ca{E}_1 = \End_k^0(A_1)$ is a central division algebra 
over $\QQ(\pi)$ 
and $\ca{E} \simeq \MM_{m_1}(\ca{E}_1)$ holds.   
Let $d = [\QQ(\pi):\QQ]$ and $c = [\ca{E}_1:\QQ(\pi)]^{1/2}$. 
By Theorem~\ref{thm:Tate2}~(2), we have 
$2\dim A_1=cd$.
Since $\dim A=m_1\dim A_1$, we have
\begin{align}
\label{eq:m_1}
m_1=\dfrac{\dim A}{\dim A_1}=\dfrac{8g}{cd}.
\end{align}

In the isomorphisms \eqref{eq:E1Dcirc} and \eqref{eq:M-ni-Di},
apply Theorem~\ref{thm:yu's_criterion} to 
$F=\QQ$, $\cA = D$, $s=1$, $\Delta_1 = \ca{E}_1$, $t_1 = t$, 
$V_1=\ca{E}_1^{m_1}$,
$\cB=\ca{E}\simeq\MM_{m_1}(\ca{E}_1)\simeq\End_{\Delta_1}(V_1)$,
$m_{1i}=n_i$,
$D_{1i}=D_i$.
Then there is a ring homomorphism 
$\imath\colon D \to \ca{E}$
if and only if there are
non-negative integers $x_1,\dots,x_t$ such that 
\begin{enumerate}
 \item[(I)] 
 $\displaystyle\sum_{i=1}^t x_i = m_1$, and
 \item[(II)] 
 $\displaystyle \frac{n_i[D_i:\QQ]}{[\ca{E}_1:\QQ]}$ 
 divides $x_i$ for $i = 1, \dots, t$.  
\end{enumerate}

Assume that $A$ has QM by $D$, i.e.,
there is a ring homomorphism
$\imath\colon D \to \ca{E}$.
Then there are non-negative integers $x_1,\dots,x_t$
satisfying (I), (II).
By (II) and \eqref{eq:nD/E}, there are non-negative integers 
$k_1,\dots,k_t$
such that
\begin{align}
\label{eq:relation-x-and-k-2}
 x_i = \frac{2g n_i[D_i:E]}{c^2d}k_i 
\end{align}
for $i  =1, \dots, t$. 
By \eqref{eq:m_1} and (I), we have 
\[
 \frac{8g}{cd} =m_1= \sum_{i=1}^t x_i = 
 \frac{2g}{c^2 d}\sum_{i=1}^t n_i[D_i:E]k_i. 
\]
Then
\begin{align}
 4c = \sum_{i=1}^t n_i [D_i:E] k_i. \label{eq:c-ni-Di-ki}
\end{align} 
On the other hand, by dimension counting of \eqref{eq:M-ni-Di}
over $E_i$,
we have $4 c^2 = n_i^2 [D_i:E_i]$.
Then
\begin{align}
 n_i = \frac{2c}{[D_i:E_i]^{1/2}}. \label{eq:ni-c-Di-Ei-1}
\end{align} 
By \eqref{eq:c-ni-Di-ki} and \eqref{eq:ni-c-Di-Ei-1}, we have 
\[
 4c = \sum_{i = 1}^t n_i [D_i:E] k_i = 
 \sum_{i = 1}^t \frac{2c}{[D_i:E_i]^{1/2}} [D_i:E] k_i
 = 2c \sum_{i = 1}^t [D_i:E_i]^{1/2}[E_i:E] k_i. 
\]
This implies
\[
 2 = \sum_{i = 1}^t [D_i:E_i]^{1/2}[E_i:E] k_i.
\] 
Note that $[D_i:E_i]^{1/2}$ is a positive integer for each $i$.
Then we have one of the following: 
\begin{enumerate}[\upshape 1.]
	\item
	For one $i\in\Set{1,\dots,t}$, 
	we have $[D_i:E_i]=[E_i:E] = 1$, $k_i = 2$; and $k_j = 0$ for $j\neq i$.  
	\item
	For two distinct $i_1, i_2\in\Set{1,\dots,t}$, 
	we have $[D_{i_l}:E_{i_l}]=[E_{i_l}:E]=k_{i_l}=1$ where $l=1,2$; 
	and $k_j = 0$ for $j\neq i_1, i_2$.  
	\item
	For one $i\in\Set{1,\dots,t}$, 
	we have $[D_i:E_i]=4$, $[E_i:E]=k_i=1$; and $k_j = 0$ for $j\neq i$. 
	\item
	For one $i\in\Set{1,\dots,t}$, 
	we have $[D_i:E_i]=1$, $[E_i:E] = 2$, $k_i=1$; and $k_j = 0$ for $j\neq i$. 
\end{enumerate}

In the case 1, we have 
$D_i = E_i = E$.
Then there is an embedding
$\jmath_i\colon \QQ(\pi) \into E_i=E$
(see \eqref{eq:ji}).
We have
$n_i = 2c$
by \eqref{eq:ni-c-Di-Ei-1},
and
$(\ca{E}_1 \otimes_{\QQ(\pi),\jmath_i} E) 
\otimes_{E} D^\circ \simeq \MM_{2c}(E)$
by \eqref{eq:M-ni-Di}.  
Then 
$[\ca{E}_1 \otimes_{\QQ(\pi),\jmath_i} E]=[D]$ in $\Br(E)$. 
This implies that (i) holds. 

In the case 2, we have $D_{i_l} = E_{i_l} = E$ for $l=1,2$. 
Then there is an embedding
$\jmath_{i_l}\colon \QQ(\pi) \into E_{i_l}=E$
for $l=1,2$.
We have
$n_{i_l} = 2c$
and
$(\ca{E}_1 \otimes_{\QQ(\pi), \jmath_{i_l}} E) \otimes_{E} D^\circ 
\simeq \MM_{2c}(E)$.
Then $[\ca{E}_1 \otimes_{\QQ(\pi),\jmath_{i_l}} E]=[D]$ in $\Br(E)$. 
This also implies that (i) holds. 

In the case 3, we have $E_i=E$.
Then there is an embedding 
$\jmath_i:\QQ(\pi) \into E_i=E$.
We have $n_i = c$ 
and 
$(\ca{E}_1 \otimes_{\QQ(\pi),\jmath_i} E) \otimes_{E} D^\circ 
\simeq \MM_{c}(D_i)$.
Then
$[\ca{E}_1 \otimes_{\QQ(\pi),\jmath_i} E]-[D]=[D_i]$.
Since $D_i$ is a quaternion division algebra over $E_i=E$,
the condition (ii) holds.

In the case 4,
$D_i=E_i$ is a quadratic extension of $E$,
and there is an embedding
$\jmath_i:\QQ(\pi) \into E_i$
such that
$E_i=E[\jmath_i(\pi)]$
(see \eqref{eq:EiEpi}).
We have $n_i = 2c$ 
and 
\begin{align}
(\ca{E}_1 \otimes_{\QQ(\pi),\jmath_i} E_i) \otimes_{E_i} (E_i \otimes_E D^\circ) 
\simeq \MM_{2c}(E_i).
\end{align}
%
Then 
$[\ca{E}_1\otimes_{\QQ(\pi),\jmath_i}E_i] = [D\otimes_E E_i]$
in $\Br(E_i)$. 
This implies that (iii) holds.

In the following, we prove the converse. 
We first assume (i),
i.e., 
there is an embedding $\jmath \colon \QQ(\pi) \into E$ such that 
$[\ca{E}_1\otimes_{\QQ(\pi), \jmath} E]=[D]$ holds in $\Br(E)$. 
Then by the same argument as in the proof of
Proposition \ref{lem:smaller-dimension},
there is a unique
$i\in\Set{1,\dots,t}$
such that
$T-\jmath(\pi)=h_i(T)$.
Hence the $E$-algebra homomorphism
$E[T]\rightarrow E$
defined by
$T\mapsto\jmath(\pi)$
induces an isomorphism
$$\phi_i: E_i=E[T]/(h_i(T))=E[T]/(T-\jmath(\pi))
\overset{\simeq}{\longrightarrow} E$$
of fields over $E$.
Moreover, the following diagram is commutative:
\begin{equation*}
\begin{CD}
\QQ(\pi)@>\text{$\jmath_i$}>> E_i\\
@V\text{$\rm id$}VV@VV\text{$\phi_i$}V\\
\QQ(\pi)@>\text{$\jmath$}>> E.
\end{CD}
\end{equation*}
We may assume $i=1$ and
identify $E_1=E$ by $\phi_1$.
Then 
$\jmath_1:\QQ(\pi)\hookrightarrow E_1$
is identified with
$\jmath:\QQ(\pi)\hookrightarrow E$.
Since
$[\ca{E}_1 \otimes_{\QQ(\pi), \jmath} E]-[D]=0$,
we have
$(\ca{E}_1\otimes_{\QQ(\pi), \jmath}E)\otimes_E D^\circ
\simeq \MM_n(E)$
for some $n\geq 1$.
By dimension counting over $E$, we have $4c^2=n^2$.
Then $n=2c$, and so
$$(\ca{E}_1\otimes_{\QQ(\pi),\jmath_1}E_1)\otimes_{E_1}
(E_1\otimes_{E}D^\circ)
\simeq \MM_{2c}(E).$$
Comparing this with \eqref{eq:M-ni-Di},
we have
$n_1=2c$ and
$D_1\simeq E=E_1$.
We set 
$$k_1 = 2,\qquad k_i=0 \quad (i=2,\dots, t),$$
and 
\[
 x_i = \frac{2gn_i[D_i:E]}{c^2d}k_i \qquad (i=1,\dots, t).
\]
Then
$x_1=\dfrac{2g\cdot 2c\cdot 1}{c^2 d}\cdot 2=\dfrac{8g}{cd}=m_1$
by \eqref{eq:m_1},
and $x_i=0$ for $i=2,\dots,t$.
Hence the condition (I) is satisfied.
By \eqref{eq:nD/E}, we have
$x_i=\dfrac{n_i[D_i:\QQ]}{[\ca{E}_1:\QQ]}k_i$ for $i=1,\dots,t$.
Then the condition (II) is also satisfied.
Therefore $A$ has QM by $D$.
Moreover,
$\dfrac{m_1}{2}=\dfrac{x_1}{2}=\dfrac{n_1[D_1:\QQ]}{[\ca{E}_1:\QQ]}$
is an integer.
Then $m_1$ is even in this case.

Next, we assume (ii), 
i.e., there are an embedding $\jmath\colon \QQ(\pi)\into E$ 
and a quaternion division algebra $D'$ over $E$ satisfying 
$[\ca{E}_1 \otimes_{\QQ(\pi), \jmath}E] - [D] = [D']$ in $\Br(E)$.
Then there are 
$i\in\{1,\dots,t\}$
and an isomorphism
$\phi_i:E_i\to E$
such that the following diagram is commutative:
\begin{equation*}
\begin{CD}
\QQ(\pi)@>\text{$\jmath_i$}>> E_i\\
@V\text{$\rm id$}VV@VV\text{$\phi_i$}V\\
\QQ(\pi)@>\text{$\jmath$}>> E.
\end{CD}
\end{equation*}
We may assume $i=1$ and identify
$E_1=E$ by $\phi_1$.
Then 
$\jmath_1:\QQ(\pi)\hookrightarrow E_1$
is identified with
$\jmath:\QQ(\pi)\hookrightarrow E$.
Since
$[\ca{E}_1 \otimes_{\QQ(\pi), \jmath} E]-[D]=[D']$,
we have
$$(\ca{E}_1\otimes_{\QQ(\pi), \jmath}E)\otimes_E D^\circ
\simeq \MM_n(D')$$
for some $n\geq 1$.
By dimension counting over $E$, we have $n=c$.
Comparing this with \eqref{eq:M-ni-Di},
we have
$n_1=c$ and $D_1\simeq D'$.
We set 
$$k_1 = 1,\qquad k_i=0 \quad (i=2,\dots, t),$$
and 
\[
 x_i = \frac{2g n_i [D_i:E]}{c^2d}k_i \qquad (i=1,\dots, t).
\]
Then
$x_1=\dfrac{2g\cdot c\cdot 4}{c^2 d}\cdot 1=\dfrac{8g}{cd}=m_1$,
and $x_i=0$ for $i=2,\dots,t$.
We also have
$x_i=\dfrac{n_i[D_i:\QQ]}{[\ca{E}_i:\QQ]}k_i$ for $i=1,\dots,t$.
Then the conditions (I), (II) are satisfied.

Finally, we assume (iii),
i.e., there are a quadratic extension field $E'$ of $E$ 
and an embedding $\jmath\colon \QQ(\pi) \into E'$ 
satisfying $E'=E[\jmath(\pi)]$
and $[\ca{E}_1\otimes_{\QQ(\pi),\jmath}E']=[D\otimes_{E}E']$ 
in $\Br(E')$. 
The quadratic extension $E'/E$ is Galois, and let
$$g(T)=(T-\jmath(\pi))(T-\sigma(\jmath(\pi))),$$
where $\sigma\in\Gal(E'/E)$ is the non-trivial element.
Then
$g(T)\in E[T]$,
and $g(T)$ divides
$\displaystyle h(T)=\prod_{\tau:\QQ(\pi)\hookrightarrow\QQb}
(T-\tau(\pi))\in\QQ[T]\subseteq E[T]$.
%
Since $g(T)\in E[T]$ is monic and irreducible, 
there is a unique
$i\in\Set{1,\dots,t}$
such that
$g(T)=h_i(T)$ in the decomposition \eqref{eq:hA1'}.
Define an $E$-algebra homomorphism
$\phi'_i:E[T]\rightarrow E'$
by $T\mapsto\jmath(\pi)$.
It is surjective by the assumption $E'=E[\jmath(\pi)]$,
and we have
$\ker(\phi'_i)=(g(T))$.
Then it induces an isomorphism
$$\phi_i:E_i=E[T]/(h_i(T))=E[T]/(g(T))\overset{\simeq}{\longrightarrow}E'$$
of fields over $E$.
Moreover, 
the following diagram is commutative:
\begin{equation*}
\begin{CD}
\QQ(\pi)@>\text{$\jmath_i$}>> E_i\\
@V\text{$\rm id$}VV@VV\text{$\phi_i$}V\\
\QQ(\pi)@>\text{$\jmath$}>> E'.
\end{CD}
\end{equation*}
We may assume $i=1$ and
identify $E_1=E'$ by $\phi_1$.
Then 
$\jmath_1:\QQ(\pi)\hookrightarrow E_1$
is identified with
$\jmath:\QQ(\pi)\hookrightarrow E'$.
Since
$[\ca{E}_1 \otimes_{\QQ(\pi), \jmath}E']-[D\otimes_E E']=0$
in $\Br(E')$,
we have
$(\ca{E}_1\otimes_{\QQ(\pi), \jmath}E')\otimes_{E'} (D\otimes_E E')^\circ
\simeq \MM_n(E')$
for some $n\geq 1$.
By dimension counting over $E'$, we have
$n=2c$.
Then
$$(\ca{E}_1\otimes_{\QQ(\pi),\jmath_1}E_1)\otimes_{E_1}
(E_1\otimes_{E}D^\circ)
\simeq \MM_{2c}(E').$$
Comparing this with \eqref{eq:M-ni-Di},
we have
$n_1=2c$ and
$D_1\simeq E'=E_1$.
We set 
$$k_1 = 1,\qquad k_i=0 \quad (i=2,\dots, t),$$
and
\[
 x_i = \frac{2g n_i [D_i:E]}{c^2d}k_i \qquad (i=1,\dots, t).
\]
Then
$x_1=\dfrac{2g\cdot 2c\cdot 2}{c^2 d}\cdot 1=\dfrac{8g}{cd}=m_1$,
and $x_i=0$ for $i=2,\dots,t$.
We also have
$x_i=\dfrac{n_i[D_i:\QQ]}{[\ca{E}_i:\QQ]}k_i$ for $i=1,\dots,t$.
Then the conditions (I), (II) are satisfied.
\end{proof}

Now we are ready to prove Theorem \ref{thm:main}.

\begin{proof}[Proof of Theorem \ref{thm:main}]
If $A$ has QM by $D$, then $A$ has at most two iso-simple factors over $k$
by Lemma \ref{lem:l-at-most-2}.

(1)
Assume that $A$ is iso-simple over $k$, i.e., $\ell=1$.
We rephrase 
the equivalent conditions for QM in
Theorem \ref{thm:end-ring-E[pi]=E}.

In the case (i) of Theorem \ref{thm:end-ring-E[pi]=E}, 
there is an embedding
$\jmath:\QQ(\pi)\into E$
satisfying
\begin{align} 
 [\ca{E}_1\otimes_{\QQ(\pi),\jmath}E] = [D] \label{eq:E_1-D-1}
\end{align}
in $\Br(E)$.
Let $c=[\ca{E}_1:\QQ(\pi)]^{1/2}$.
If $D\simeq \MM_2(E)$, then
$\ca{E}_1\otimes_{\QQ(\pi),\jmath} E \simeq \MM_{c}(E)$. 
If $D\not\simeq \MM_2(E)$, then
$\ca{E}_1\otimes_{\QQ(\pi),\jmath} E \simeq \MM_{n}(D)$
for some $n\geq 1$.
By dimension counting over $E$, we have $c^2 = 4n^2$.
Then $c= 2n$, which is even.
Thus 
$\ca{E}_1\otimes_{\QQ(\pi),\jmath} E \simeq \MM_{c/2}(D)$. 
Therefore the condition (i) in Theorem \ref{thm:main} (1) holds.

Conversely, if the condition (i) in Theorem~\ref{thm:main} (1)
holds, then
$[\ca{E}_1\otimes_{\QQ(\pi),\jmath} E]=[D]$ in $\Br(E)$
in the cases (a), (b).
Thus the condition (i) in Theorem \ref{thm:end-ring-E[pi]=E} holds.

In the case (ii) of Theorem~\ref{thm:end-ring-E[pi]=E}, 
there are an embedding $\jmath\colon \QQ(\pi) \into E$
and a quaternion division algebra $D'$ over $E$ satisfying 
\begin{align}
[\ca{E}_1\otimes_{\QQ(\pi), \jmath} E] - [D] = [D'] 
\end{align}
in $\Br(E)$. 
Assume $D\simeq \MM_2(E)$. 
Then 
$[\ca{E}_1\otimes_{\QQ(\pi), \jmath} E] = [D']$.
By dimension counting over $E$, $c$ is even and 
$\ca{E}_1\otimes_{\QQ(\pi), \jmath} E\simeq \MM_{c/2}(D')$ holds. 
Assume $D\not\simeq \MM_2(E)$. 
Then 
$[\ca{E}_1\otimes_{\QQ(\pi),\jmath}E] = [D] + [D']$.
Since $D,D'$ are quaternion algebras over $E$,
we have
$2[D]=2[D']=0$.
Then 
$2[\ca{E}_1\otimes_{\QQ(\pi),\jmath}E]=0$.
By \cite[Remark 1.5.18]{Poonen}, there is a quaternion algebra $D''$ over $E$
such that
$[\ca{E}_1\otimes_{\QQ(\pi),\jmath}E]=[D'']$.
If $D''\simeq\MM_2(E)$,
then
$\ca{E}_1\otimes_{\QQ(\pi),\jmath}E\simeq\MM_c(E)$.
Assume $D''$ is division.
Then $c$ is even and
$\ca{E}_1\otimes_{\QQ(\pi), \jmath} E\simeq \MM_{c/2}(D'')$ holds. 
Since $D'$ is division, we have
$[D'']-[D]=[D']\ne 0$ in $\Br(E)$.
Then $D''\not\simeq D$.
Thus the condition (ii) in Theorem \ref{thm:main} (1) holds.

Conversely, when the condition (ii) in Theorem \ref{thm:main} (1)
holds, we find a quaternion division algebra $D'$ over $E$ satisfying
$$[\ca{E}_1\otimes_{\QQ(\pi), \jmath} E] - [D] = [D']$$
as follows.
In the case (ii) (a) of Theorem \ref{thm:main} (1),
let $D'$ be as in the statement of (ii) (a).
Then
$$[\ca{E}_1\otimes_{\QQ(\pi), \jmath} E] - [D] = [D']-0=[D'],$$
as required.
In the case (ii) (b) 1 of Theorem \ref{thm:main} (1), we have
$$[\ca{E}_1\otimes_{\QQ(\pi), \jmath} E] - [D] = 0-[D]=[D^\circ].$$
Let $D'=D^\circ$.
Since $D$ is division, so is $D'$.
In the case (ii) (b) 2 of Theorem \ref{thm:main} (1), 
let $D''$ be as in the statement of (ii) (b) 2.
Then
$$[\ca{E}_1\otimes_{\QQ(\pi), \jmath} E] - [D] = [D'']-[D].$$
Since
$2([D'']-[D])=0$,
there is a quaternion algebra $D'$ over $E$ such that
$[D'']-[D]=[D']$.
Since $D''\not\simeq D$,
we have $[D']\ne 0$.
Then $D'$ is division.
Therefore the condition (ii) in Theorem \ref{thm:end-ring-E[pi]=E} holds.

In the case (iii) of Theorem~\ref{thm:end-ring-E[pi]=E}, 
there are a quadratic extension field $E'$ of $E$ 
and an embedding $\jmath\colon \QQ(\pi) \into E'$ 
satisfying $E'=E[\jmath(\pi)]$ and 
\begin{align}
[\ca{E}_1\otimes_{\QQ(\pi),\jmath}E'] = [D\otimes_{E}E'] 
\label{eq:E_1-F-D-F} 
\end{align}
in $\Br(E')$. 
If $D\otimes_E E'\simeq\MM_2(E')$, then
$\ca{E}_1 \otimes_{\QQ(\pi),\jmath}E' \simeq \MM_c(E')$. 
If $D\otimes_{E}E'$ is division, then 
by dimension counting over $E'$, $c$ is even and
$\ca{E}_1\otimes_{\QQ(\pi),\jmath}E' \simeq \MM_{c/2}(D\otimes_{E}E')$ 
holds.
Therefore the condition (iii) in Theorem \ref{thm:main} (1) holds.

The converse is obvious.

(2)
Assume that $A$ has exactly two iso-simple factors over $k$, 
i.e., $\ell=2$.
Then the assertion follows from Lemma \ref{lem:QM}
and Proposition \ref{lem:smaller-dimension}.
\end{proof}

\subsection{The case where $\QQ[\pi]$ is a totally real field}
\label{subsec:totally-real}

When $A$ is iso-simple over $k=\FF_q$ and $\QQ(\pi)$ is totally real,
$\End_k^0(A)$ can be described explicitly by 
Corollary \ref{cor:Tate-totally-real}.
In this case, we give an equivalent condition for $A$ to have QM
by $D$, assuming further that $E$ is totally imaginary.
%

\begin{cor}
\label{cor:main-intro-totally-real}
Let $A_1$ be a simple abelian variety over $k=\FF_q$ with $q=p^r$,
and let $m_1\geq 1$ be an integer.
Let $A$ be an abelian variety over $k$ such that 
$A\sim_kA_1^{m_1}$.
Assume that $E$ is a totally imaginary field of degree $2g$,
and that $\QQ(\pi)$ is a totally real field.
Let 
$\ca{E}_1=\End_k^0(A_1)$.
%
\begin{enumerate}[\upshape(1)]
\item
Assume that $r$ is odd.
Then $\QQ(\pi)\simeq \QQ(\sqrt{p})$, 
$A_1$ is an abelian surface, 
and $\ca{E}_1\simeq \cD'_p$.
\begin{enumerate}
\item[{\rm (A)}] 
Suppose $\dim A=4g$.
Then $A$ has QM by $D$ if and only if one of the following holds:
\begin{enumerate}
\item[1.] 
There is an embedding $\jmath\colon \QQ(\pi)\into E$ 
such that 
$\ca{E}_1\otimes_{\QQ(\pi),\jmath}E \simeq \MM_2(E)$. 
\item[2.] 
There are a quadratic extension field $E'$ of $E$ 
and an embedding $\jmath\colon\QQ(\pi)\into E'$ 
such that $E'=E[\jmath(\pi)]$
and 
$D\otimes_{E}E'\simeq \MM_2(E')
\simeq \ca{E}_1\otimes_{\QQ(\pi),\jmath}E'$.
\end{enumerate}
\item[{\rm (B)}] 
Suppose $\dim A=2g$.
Then 
$A$ has QM by $D$ if and only if 
$D\simeq \MM_2(E)$ and
there is an embedding 
$\jmath\colon\QQ(\pi)\into E$ 
such that 
$\ca{E}_1\otimes_{\QQ(\pi),\jmath}E\simeq\MM_2(E)$.
\end{enumerate}
\item
Assume that $r$ is even.
Then $\QQ(\pi)=\QQ$, 
$A_1$ is an elliptic curve, 
and $\ca{E}_1\simeq \cD_p$.
\begin{enumerate}
\item[{\rm (A)}] 
Suppose $\dim A=4g$.
Then
$A$ has QM by $D$.  
\item[{\rm (B)}] 
Suppose $\dim A=2g$.
Then 
$A$ has QM by $D$ if and only if 
$\ca{E}_1\otimes_{\QQ} E \simeq D$ holds. 	
\end{enumerate}
\end{enumerate}
\end{cor}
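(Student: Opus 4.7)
The plan is to combine Corollary \ref{cor:Tate-totally-real} (which pins down $\QQ(\pi)$, $\dim A_1$ and $\ca{E}_1$) with Theorem \ref{thm:main}~(1) when $\dim A = 4g$ and Proposition \ref{lem:smaller-dimension} when $\dim A = 2g$, simplifying both via a short Brauer-theoretic computation. Corollary \ref{cor:Tate-totally-real} immediately gives the preamble in each case: for $r$ odd, $\QQ(\pi) \simeq \QQ(\sqrt{p})$, $A_1$ is an abelian surface and $\ca{E}_1 \simeq \cD'_p$; for $r$ even, $\QQ(\pi) = \QQ$, $A_1$ is an elliptic curve and $\ca{E}_1 \simeq \cD_p$. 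In both situations $c := [\ca{E}_1 : \QQ(\pi)]^{1/2} = 2$.

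The decisive observation in case (1) is that for any embedding $\jmath \colon \QQ(\pi) \into L$ with $L = E$ or $L = E'$ a quadratic extension of $E$, the tensor product $\ca{E}_1 \otimes_{\QQ(\pi),\jmath} L$ is already isomorphic to $\MM_2(L)$. Indeed, $\cD'_p$ is ramified only at the two real places of $\QQ(\sqrt{p})$; since $E$, and hence any quadratic extension $E'$ of $E$, is totally imaginary, every place $w$ of $L$ above a real place $v$ of $\QQ(\sqrt{p})$ satisfies $[L_w : \QQ(\sqrt{p})_v] = [\CC : \RR] = 2$, whence $\inv_w = 2 \cdot \tfrac{1}{2} \equiv 0 \pmod{\ZZ}$. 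The finite-place invariants of $\cD'_p$ vanish, so $\ca{E}_1 \otimes_{\QQ(\pi),\jmath} L$ has all local invariants zero, and dimension counting forces the matrix algebra of rank $2$.

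For (1)(A) I would then walk through Theorem \ref{thm:main}~(1) case by case with $c = 2$. The automatic splitting rules out the sub-cases (i)(b), (ii)(a), (ii)(b)2 and (iii)(b), each of which would demand that $\ca{E}_1 \otimes_{\QQ(\pi),\jmath} L$ be a division algebra. The surviving sub-cases (i)(a) and (ii)(b)1 both reduce to condition~1 of the corollary (the automatic splitting removes any constraint on $D$), while (iii)(a) yields condition~2. For (1)(B) one applies Proposition \ref{lem:smaller-dimension}: the required Brauer equality $[\ca{E}_1 \otimes_{\QQ(\pi),\jmath} E] = [D]$ collapses, in view of the splitting, to $[D] = 0$, i.e.\ $D \simeq \MM_2(E)$.

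Case (2) is simpler because $\QQ(\pi) = \QQ$ embeds uniquely into $E$, so the question becomes a direct Brauer comparison. For (2)(B), Proposition \ref{lem:smaller-dimension} gives that $A$ has QM by $D$ iff $[\cD_p \otimes_\QQ E] = [D]$ in $\Br(E)$, which by dimension counting is equivalent to $\cD_p \otimes_\QQ E \simeq D$. For (2)(A), both $[\cD_p \otimes_\QQ E]$ and $[D]$ have order dividing $2$ in $\Br(E)$, so by \cite[Remark 1.5.18]{Poonen} their difference is represented by a quaternion algebra $D^*$ over $E$; whether $D^*$ is split or division, condition (i) or (ii) of Theorem \ref{thm:main}~(1) is satisfied, so $A$ has QM by $D$ unconditionally. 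I expect the main obstacle to be the bookkeeping in (1)(A): one must systematically verify which sub-cases of Theorem \ref{thm:main}~(1) survive the automatic splitting and then confirm they match exactly the two clean alternatives stated in the corollary.
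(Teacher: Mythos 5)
Your proposal is correct and follows essentially the same route as the paper: the preamble via Corollary \ref{cor:Tate-totally-real}, the key observation that $\cD'_p$ (resp.\ $\cD_p$) splits over any totally imaginary extension of $\QQ(\pi)$, a case-by-case reduction of Theorem \ref{thm:main}~(1) for $\dim A = 4g$, and Proposition \ref{lem:smaller-dimension} for $\dim A = 2g$; your treatment of case (2)(A) using Poonen's remark once to produce $D^*$ is a slightly more unified packaging of the paper's four sub-cases, but the underlying argument is identical.
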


\begin{proof}
Let
$c= [\ca{E}_1:\QQ(\pi)]^{1/2}$.

(1)
Assume that $r$ is odd.
Then $A_1$ is an abelian surface, 
$\QQ(\pi)\simeq\QQ(\sqrt{p})$,
and
$\ca{E}_1\simeq \cD'_p$
by Corollary~\ref{cor:Tate-totally-real}~(1).
Thus $c=2$. 
Since $\ca{E}_1$ is ramified only at the real places, we have
\begin{equation}\label{eq:splitting-by-CM}
\ca{E}_1\otimes_{\QQ(\pi)}E'' \simeq \MM_2(E'') 
\end{equation}
for any totally imaginary field $E''$ containing $\QQ(\pi)$ as a subfield.

(A)
Suppose $\dim A=4g$.

Assume that $A$ has QM by $D$.

In the case (i) (a) of Theorem~\ref{thm:main}~(1), 
we have
$D\simeq \MM_2(E)$ and
there is an embedding $\jmath\colon\QQ(\pi)\into E$ such that 
$\ca{E}_1\otimes_{\QQ(\pi),\jmath}E
\simeq\MM_c(E)=\MM_2(E)$. 
Then the condition 1 holds.
In the case (i) (b) of Theorem~\ref{thm:main}~(1), 
we have $D\not\simeq \MM_2(E)$ and
there is an embedding $\jmath\colon\QQ(\pi)\into E$ such that 
 $\ca{E}_1\otimes_{\QQ(\pi),\jmath}E\simeq D$. 
However, this contradicts \eqref{eq:splitting-by-CM}
since $E$ is totally imaginary by assumption.

In the case (ii) (a) of Theorem~\ref{thm:main}~(1),
we have $D\simeq \MM_2(E)$ and
there are an embedding $\jmath\colon\QQ(\pi)\into E$ 
and a quaternion division algebra $D'$ over $E$
such that 
$\ca{E}_1\otimes_{\QQ(\pi),\jmath}E\simeq D'$.
However, this contradicts \eqref{eq:splitting-by-CM}. 
In the case (ii) (b) 1 of Theorem~\ref{thm:main}~(1), 
we have
$D\not\simeq \MM_2(E)$ and 
there is an embedding $\jmath\colon\QQ(\pi)\into E$ 
such that 
$\ca{E}_1\otimes_{\QQ(\pi),\jmath}E \simeq \MM_2(E)$. 
Then the condition 1 holds.
In the case (ii) (b) 2 of Theorem~\ref{thm:main}~(1), 
we also have a contradiction to \eqref{eq:splitting-by-CM}. 

In the case (iii) (a) of Theorem~\ref{thm:main}~(1), 
the condition 2 holds since $c=2$.
In the case (iii) (b) of Theorem~\ref{thm:main}~(1), 
there are a quadratic extension field $E'$ of $E$
and an embedding $\jmath\colon \QQ(\pi)\into E'$ 
satisfying 
$E'=E[\jmath(\pi)]$,
$D\otimes_E E'\not\simeq \MM_2(E')$, 
and $\ca{E}_1\otimes_{\QQ(\pi),\jmath}E'\simeq D\otimes_E E'$.
This contradicts \eqref{eq:splitting-by-CM} because $E'$ is totally imaginary. 

Conversely, we prove that $A$ has QM by $D$ if the condition 1 or 2
holds.

Assume that the condition 1 holds.
If $D\simeq\MM_2(E)$, then
$$[\ca{E}_1\otimes_{\QQ(\pi),\jmath}E]=[D]$$
in $\Br(E)$.
Hence the condition (i) of Theorem~\ref{thm:end-ring-E[pi]=E}
is satisfied, and so $A$ has QM by $D$.
If $D\not\simeq\MM_2(E)$, let $D'=D^\circ$.
Then $D'$ is a quaternion division algebra over $E$ satisfying
$$[\ca{E}_1\otimes_{\QQ(\pi),\jmath}E]-[D]
=-[D]=[D^\circ]=[D'].$$
Hence the condition (ii) of Theorem~\ref{thm:end-ring-E[pi]=E}
is satisfied.

Next, assume that the condition 2 holds.
Then
$$[\ca{E}_1\otimes_{\QQ(\pi),\jmath}E']=[D\otimes_E E'],$$
and so the condition (iii) of Theorem~\ref{thm:end-ring-E[pi]=E}
is satisfied.

(B)
Suppose $\dim A=2g$.

Assume that $A$ has QM by $D$.
If $D\simeq\MM_2(E)$,
then by Proposition~\ref{lem:smaller-dimension},
there is an embedding 
$\jmath\colon\QQ(\pi)\into E$ 
such that 
$\ca{E}_1\otimes_{\QQ(\pi),\jmath}E\simeq\MM_c(E)=\MM_2(E)$. 
If $D\not\simeq\MM_2(E)$,
then there is an embedding 
$\jmath\colon\QQ(\pi)\into E$ 
such that
$\ca{E}_1\otimes_{\QQ(\pi),\jmath}E\simeq D$.  
However, this contradicts \eqref{eq:splitting-by-CM}
since $E$ is totally imaginary by assumption.

The converse follows from Proposition~\ref{lem:smaller-dimension} (1).

(2)
Assume that $r$ is even.
Then $A_1$ is an elliptic curve, 
$\QQ(\pi)=\QQ$, and $\ca{E}_1\simeq \cD_p$
by Corollary~\ref{cor:Tate-totally-real}~(2).
Thus $c=2$. 
Since $\QQ(\pi)=\QQ$, there is a unique embedding
$\jmath:\QQ(\pi)\hookrightarrow E$.

(A)
Suppose $\dim A=4g$.
We prove that $A$ has QM by $D$ in this case.

Assume $D\simeq\MM_2(E)$.
If $\ca{E}_1\otimes_\QQ E\simeq\MM_2(E)$,
then
$$[\ca{E}_1\otimes_{\QQ} E]=[D].$$
Hence the condition (i) of Theorem~\ref{thm:end-ring-E[pi]=E} is satisfied,
and so $A$ has QM by $D$.
If $\ca{E}_1\otimes_\QQ E\not\simeq\MM_2(E)$,
let $D'=\ca{E}_1\otimes_{\QQ} E$.
Then $D'$ is a quaternion division algebra over $E$,
and
$$[\ca{E}_1\otimes_{\QQ} E]-[D]=[D'].$$
Hence the condition (ii) of Theorem~\ref{thm:end-ring-E[pi]=E} is satisfied.

Next, assume
$D\not\simeq\MM_2(E)$.
If $\ca{E}_1\otimes_{\QQ} E\simeq\MM_2(E)$,
let
$D'=D^\circ$.
Then $D'$ is a quaternion division algebra over $E$,
and
$$[\ca{E}_1\otimes_{\QQ} E]-[D]=[D^\circ]=[D'].$$
Hence the condition (ii) of Theorem~\ref{thm:end-ring-E[pi]=E} is satisfied.
If $\ca{E}_1\otimes_\QQ E\not\simeq\MM_2(E)$,
let
$D''=\ca{E}_1\otimes_\QQ E$.
Then
$$[\ca{E}_1\otimes_{\QQ} E]-[D]=[D'']-[D].$$
Suppose $D''\simeq D$.
Then
$[\ca{E}_1\otimes_{\QQ} E]=[D]$,
and the condition (i) of Theorem~\ref{thm:end-ring-E[pi]=E} is satisfied.
Suppose $D''\not\simeq D$.
By \cite[Remark 1.5.18]{Poonen}, there is a quaternion division algebra $D'$
such that
$[D'']-[D]=[D']$.
Then
$$[\ca{E}_1\otimes_\QQ E]-[D]=[D'].$$
Hence the condition (ii) of Theorem~\ref{thm:end-ring-E[pi]=E}
is satisfied.

(B)
Suppose $\dim A=2g$.

Assume that $A$ has QM by $D$.
If $D\simeq\MM_2(E)$, then by Proposition~\ref{lem:smaller-dimension},
we have
$\ca{E}_1\otimes_\QQ E\simeq\MM_c(E)=\MM_2(E)$.
If $D\not\simeq\MM_2(E)$, 
then
$\ca{E}_1\otimes_\QQ E\simeq D$.
In any case, we have
$\ca{E}_1\otimes_\QQ E\simeq D$.

Conversely, if 
$\ca{E}_1\otimes_\QQ E\simeq D$,
then $A$ has QM by $D$ by Proposition~\ref{lem:smaller-dimension} (1).
\end{proof}

\appendix

\section{QM-abelian surfaces by a quaternion algebra over $\QQ$}
\label{sec:abelian-surface}

Let $A$ be an abelian surface over 
a field $F$.
Let $B$ be a quaternion algebra over $\QQ$. 
We do not assume that $B$ is indefinite or division.
In this appendix, we say that $A$ has 
{\it quaternionic multiplication} (or {\it QM}) by $B$
if there is a ring homomorphism 
$$\imath\colon B \to \ca{E}=\End_F^0(A).$$
Note that $\imath$ is injective since $B$ is a simple algebra.

In the following,
we provide an equivalent condition for an abelian surface over a finite field 
$k$ to have QM by $B$. 
This is an extension of the result
\cite[Proposition~2.3]{jordan1986points},
which gives a necessary condition 
for an abelian surface over $k$ to have QM 
by an indefinite quaternion division algebra over $\QQ$.
In \cite[Proposition~2.3]{jordan1986points},
the proof is done by using the classification
\cite[Theorem 2.1]{jordan1986points}
of the endomorphism algebras of QM-abelian surfaces.
Here, we use Theorem \ref{thm:yu's_criterion} in the proof instead.

\begin{thm}\label{thm:main-intro-elliptic}
Let $A$ be an abelian surface over $k=\FF_q$ with $q=p^r$, 
and let $B$ be a quaternion algebra over $\QQ$. 
\begin{enumerate}[\upshape(1)]
\item
Assume that $A$ is iso-simple over $k$, i.e., there are a simple 
abelian variety $A_1$ over $k$ and an integer $m_1\geq 1$
such that $A\sim_k A_1^{m_1}$.
Then $A$ has QM by $B$ if and only if one of the following holds:
\begin{enumerate}
\item[\rm (a)]
$A_1$ is an elliptic curve, $r$ is even, and $\End_k^0(A_1)\simeq \cD_p$.
\item[\rm (b)]
$A_1$ is an elliptic curve, 
$\End_k^0(A_1)=\QQ(\pi)$ is a quadratic field,
and
$\End_k^0(A)\simeq B\otimes_\QQ \QQ(\pi)\simeq\MM_2(\QQ(\pi))$.
\item[\rm (c)]
$A$ is simple over $k$, $\QQ(\pi)$ is a quadratic field,
and
$\End_k^0(A)\simeq B\otimes_\QQ \QQ(\pi)\not\simeq\MM_2(\QQ(\pi))$.
\end{enumerate}
\item
Assume that $A$ is not iso-simple over $k$, i.e.,
there are elliptic curves $A_1, A_2$ over $k$ such that
$A\sim_k A_1\times A_2$ 
and
$A_1\not\sim_k A_2$.
Then $A$ has QM by $B$ if and only if
\begin{enumerate}
\item[\rm (a)]
$r$ is even and
$B\simeq\End_k^0(A_1)\simeq\End_k^0(A_2)\simeq \cD_p$.
\end{enumerate} 
\end{enumerate}
\end{thm}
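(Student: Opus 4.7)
The plan is to treat this as the special case $E=\QQ$ of the framework developed for the main theorem, adapted to $\dim A=2$ rather than $\dim A=4g$. Since $E=\QQ$ has degree $1$, this lies outside the $g\geq 1$ hypothesis of Theorem~\ref{thm:main}, so the dimensional constants must be redone, but the overall strategy --- Honda--Tate classification combined with Yu's criterion --- carries over directly. A first observation is that any iso-simple factor of $A$ has dimension $1$ or $2$; combined with Lemma~\ref{lem:QM} this splits the argument into case (1) $A\sim_k A_1^{m_1}$ and case (2) $A\sim_k A_1\times A_2$ with distinct simple elliptic curves $A_1,A_2$.

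For case (1), I set $\ca{E}_1=\End_k^0(A_1)$, $L=\QQ(\pi_{A_1})$, $c=[\ca{E}_1:L]^{1/2}$, and $d=[L:\QQ]$. Tate's Theorem~\ref{thm:Tate2} gives $2\dim A_1=cd$, so the constraint $m_1\dim A_1=2$ becomes $m_1cd=4$, leaving a short finite list of triples $(m_1,c,d)$. Because $E=\QQ$, the minimal polynomial of $\pi_{A_1}$ does not further factor in $E[T]$, so the decomposition $\ca{E}_1\otimes_\QQ B^\circ\simeq\MM_n(D)$ has only one simple factor over $L$, with $n^2[D:L]=4c^2$. Yu's criterion (Theorem~\ref{thm:yu's_criterion}) then reduces QM by $B$ to the single numerical condition that $4/n$ divides $m_1$, and I would enumerate the triples: $(m_1,c,d)=(2,2,1)$ yields (1)(a) (by Corollary~\ref{cor:Tate-totally-real} one has $\ca{E}_1\simeq\cD_p$ and $r$ even, and every quaternion algebra over $\QQ$ works because the class $[\cD_p]-[B]$ is $2$-torsion in $\Br(\QQ)$); $(2,1,2)$ yields (1)(b) (the condition is exactly that $L$ splits $B$); $(1,2,2)$ yields (1)(c) (the condition forces $\ca{E}_1\simeq B\otimes_\QQ L$ as $4$-dimensional central simple algebras over $L$, which must then be division); $(1,1,4)$ is ruled out by the dimensional bound $n\leq 2$; and $(1,4,1)$ is vacuous by Corollary~\ref{cor:Tate-totally-real}(2).

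For case (2), Lemma~\ref{lem:QM} reduces QM of $A$ to QM of each iso-simple factor $A_i$. Applying the case-(1) analysis with $m_i=1$ and $\dim A_i=1$, the ordinary alternative $(c,d)=(1,2)$ is immediately excluded by $\dim_\QQ\ca{E}_i=2<4=\dim_\QQ B$, so each $A_i$ must be supersingular with $\ca{E}_i\simeq\cD_p$ and $r$ even; the criterion $n=4$ then forces $B^\circ\simeq\cD_p$, that is $B\simeq\cD_p$, which gives (2)(a).

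The main obstacle is the vacuous subcase $(m_1,c,d)=(1,4,1)$: ruling it out requires invoking Tate's local-invariant computation at $p$ (as packaged in Corollary~\ref{cor:Tate-totally-real}), since dimension counting alone does not prevent a simple abelian surface over $\FF_q$ from having $\QQ(\pi)=\QQ$. The rest of the case analysis is essentially bookkeeping with Brauer classes over $\QQ$ and $L$, exploiting the $2$-torsion of quaternion classes and the standard dimension relations for central simple algebras.
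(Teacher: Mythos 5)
Your proposal is correct and uses essentially the same machinery as the paper's proof: Tate's classification (via Corollary~\ref{cor:Tate-totally-real}), the decomposition of $\ca{E}_1\otimes_\QQ B^\circ$, and Yu's criterion applied with $t=1$. The only differences are organizational: you enumerate the triples $(m_1,c,d)$ with $m_1cd=4$ up front and check the divisibility condition $4/n_1\mid m_1$ in each case, whereas the paper derives the constraint $2=[D_1:\QQ(\pi)]^{1/2}[\QQ(\pi):\QQ]k_1$ directly from Yu's criterion and splits from there; and for part (2) you reuse Yu's criterion, while the paper uses the shorter direct observation that an injection $B\hookrightarrow\ca{E}_i$ of $4$-dimensional $\QQ$-algebras must be an isomorphism --- but both variants land in the same places.
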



\begin{proof}
(1)
Let $\ca{E} = \End_k^0(A)$ and $\ca{E}_1 = \End_k^0(A_1)$. 
Then $\ca{E} \simeq \MM_{m_1}(\ca{E}_1)$ and 
$\ca{E}_1$ is a central division algebra over $\QQ(\pi)$.
We have
$2=\dim A=m_1\dim A_1$.
Let $c = [\ca{E}_1:\QQ(\pi)]^{1/2}$ and $d = [\QQ(\pi):\QQ]$. 
Then by Theorem~\ref{thm:Tate2}~(2), we have 
\begin{align}
\label{eq:m1-cd-surface}
\frac{4}{m_1}= 2\dim A_1=cd. 	
\end{align}
We consider the tensor product 
\begin{align}
\label{eq:E1B}
\ca{E}_1 \otimes_\QQ B^\circ \simeq 
\ca{E}_1 \otimes_{\QQ(\pi)} (\QQ(\pi) \otimes_\QQ B^\circ). 
\end{align}
Since both $\ca{E}_1$ and $\QQ(\pi)\otimes_\QQ B^\circ$ 
are central simple algebras over $\QQ(\pi)$,
so is
$\ca{E}_1 \otimes_{\QQ(\pi)} (\QQ(\pi) \otimes_\QQ B^\circ)$.
Then there are a positive integer $n_1$ and a central division algebra $D_1$
over $\QQ(\pi)$ satisfying
\begin{align}
\label{eq:E1Bcirc}
\ca{E}_1 \otimes_{\QQ(\pi)} (\QQ(\pi) \otimes_\QQ B^\circ)
\simeq\MM_{n_1}(D_1).
\end{align}
In this case, 
$n_1[D_1:\QQ]/[\ca{E}_1:\QQ]$ is a positive integer
by Remark \ref{rem:integer} (2).

In the isomorphisms \eqref{eq:E1B} and \eqref{eq:E1Bcirc},
apply Theorem~\ref{thm:yu's_criterion} to 
$F=\QQ$, 
$\cA = B$, 
$s=1$, 
$\Delta_1 = \ca{E}_1$, 
$t_1 = 1$, 
$V_1=\ca{E}_1^{m_1}$,
$\cB=\ca{E}\simeq\MM_{m_1}(\ca{E}_1)\simeq\End_{\Delta_1}(V_1)$, 
$m_{11}=n_1$, $D_{11}=D_1$.
Then there is a ring homomorphism 
$\imath\colon B \rightarrow \ca{E}$
if and only if there is a non-negative integer $x_1$ such that 
\begin{enumerate}
 \item[(I)] 
 $x_1 = m_1$, and
 \item[(II)] 
 $\displaystyle \frac{n_1[D_1:\QQ]}{[\ca{E}_1:\QQ]}$ divides $x_1$.  
\end{enumerate}
Note that
$\displaystyle \frac{n_1[D_1:\QQ]}{[\ca{E}_1:\QQ]} = \frac{n_1[D_1:\QQ]}{c^2d}$.

Assume that $A$ has QM by $B$, i.e.,
there is a ring homomorphism 
$$\imath: B\rightarrow\ca{E}.$$
Then there is a non-negative integer $x_1$ satisfying (I), (II).
By (I) and (II), there is a non-negative integer $k_1$ satisfying
\begin{align}
\label{eq:m1-x1-k1-surface}
 m_1 = x_1 = \frac{n_1[D_1:\QQ]}{c^2d}k_1.
\end{align}
By \eqref{eq:m1-cd-surface} and~\eqref{eq:m1-x1-k1-surface}, we have 
$4 = cd m_1 = \dfrac{n_1[D_1:\QQ]}{c}k_1$.
Then
\begin{align}
\label{eq:4c-n1D1k1-surface}
 4c = n_1[D_1:\QQ]k_1. 
\end{align}
On the other hand, by dimension counting for \eqref{eq:E1Bcirc}
over $\QQ(\pi)$,
we have 
$4c^2 =n_1^2[D_1:\QQ(\pi)]$.
Then
\begin{align}
\label{eq:n1-2c-D1-surface}
n_1=\frac{2c}{[D_1:\QQ(\pi)]^{1/2}}. 
\end{align}
By \eqref{eq:4c-n1D1k1-surface} and \eqref{eq:n1-2c-D1-surface}, 
we have 
\[
 4c = \frac{2c}{[D_1:\QQ(\pi)]^{1/2}}[D_1:\QQ]k_1 
 = 2c [D_1:\QQ(\pi)]^{1/2}[\QQ(\pi):\QQ]k_1.
\]
This implies
\begin{align}
\label{eq:2-D1Qpik1-surface}
2 = [D_1:\QQ(\pi)]^{1/2}[\QQ(\pi):\QQ]k_1. 
\end{align}
Note that $[D_1:\QQ(\pi)]^{1/2}$ is a positive integer
since $D_1$ is a central division algebra over $\QQ(\pi)$.
Then we have one of the following: 
\begin{enumerate}
	\item[1.] 
	$[D_1:\QQ(\pi)]=1$, $[\QQ(\pi):\QQ]=1$, $k_1=2$.
	\item[2.] 
	$[D_1:\QQ(\pi)]=4$, $[\QQ(\pi):\QQ]=1$, $k_1 = 1$.
	\item[3.] 
	$[D_1:\QQ(\pi)]=1$, $[\QQ(\pi):\QQ]=2$, $k_1=1$.
\end{enumerate}

In the case 1, we have
$D_1=\QQ(\pi)=\QQ$.
Then $d=1$.
By \eqref{eq:n1-2c-D1-surface}, we have
$n_1=2c$.
Then
\begin{align}
\ca{E}_1 \otimes_\QQ B^\circ \simeq \MM_{2c}(\QQ)
\end{align}
by \eqref{eq:E1B} and \eqref{eq:E1Bcirc}.
Thus $[\ca{E}_1]=[B]$ in $\Br(\QQ)$. 
By Corollary~\ref{cor:Tate-totally-real},
$\QQ(\pi)=\QQ$ implies that
$r$ is even, $\ca{E}_1\simeq \cD_p$
and
$A_1$ is an elliptic curve.
Then $c =[\ca{E}_1:\QQ(\pi)]^{1/2} = 2$
and
$m_1=2$. 
Thus the condition (a) holds.
Note that
$B\simeq\ca{E}_1\simeq \cD_p$
in this case.

In the case 2, we have
$\QQ(\pi)=\QQ$ and $D_1$ is a quaternion division algebra over $\QQ$. 
Then $d=1$, $n_1=c$ and
\begin{align}
 \ca{E}_1 \otimes_\QQ B^\circ \simeq 
 \MM_{c}(D_1). 
\end{align}
Thus
$[\ca{E}_1]-[B]=[D_1]$
in $\Br(\QQ)$.
By Corollary~\ref{cor:Tate-totally-real},
$\QQ(\pi)=\QQ$ implies that
$r$ is even, $\ca{E}_1\simeq \cD_p$
and $A_1$ is an elliptic curve.
Then
$c=2$ and $m_1=2$.  
Thus the condition (a) holds.
Note that 
$B\not\simeq\ca{E}_1\simeq \cD_p$
in this case.

In the case 3, we have
$D_1=\QQ(\pi)$,
which is a quadratic field.
Then $d=2$, $n_1 = 2c$ and
\begin{align}
\ca{E}_1 \otimes_{\QQ(\pi)} (\QQ(\pi) \otimes_\QQ B^\circ) 
\simeq \MM_{2c}(\QQ(\pi)). 
\end{align}
Thus
\begin{align}
\label{eq:E1BQpi}
[\ca{E}_1] = [B\otimes_\QQ \QQ(\pi)]
\end{align}
in $\Br(\QQ(\pi))$. 
By \eqref{eq:m1-cd-surface}, we have
$4/m_1 = cd = 2c$.
Then $2 = cm_1$. 
Hence $(c,m_1)=(1,2)$ or $(2,1)$.
Assume $(c,m_1)=(1,2)$. 
Then 
$A_1$ is an elliptic curve, 
and $\ca{E}_1=\QQ(\pi)$. 
By \eqref{eq:E1BQpi}, we have
$B\otimes_{\QQ}\QQ(\pi) \simeq \MM_2(\QQ(\pi))\simeq\ca{E}$. 
Then the condition (b) holds. 
Assume $(c,m_1)=(2,1)$. 
Then 
$A_1$ is an abelian surface, 
and $\ca{E}_1$ is a quaternion division algebra over $\QQ(\pi)$. 
Thus
$\ca{E}\simeq\ca{E}_1 \simeq B\otimes_\QQ \QQ(\pi)$ 
by \eqref{eq:E1BQpi}. 
Hence the condition (c) holds. 

We prove the converse.  
Assume that the condition (a) holds.
Then
$\ca{E}_1\simeq \cD_p$ and
$m_1 = 2$. 
Comparing the centers of
$\ca{E}_1\simeq \cD_p$,
we have $\QQ(\pi)=\QQ$.
Assume $B\simeq \ca{E}_1$. 
Then $\ca{E}\simeq\MM_{m_1}(\ca{E}_1)\simeq \MM_2(B)$ holds. 
Thus $A$ has QM by $B$
by the diagonal embedding
$B\hookrightarrow\MM_2(B)\simeq\ca{E}$.
Next, assume $B \not \simeq \ca{E}_1$. 
Then there is a quaternion division algebra $D'$ 
over $\QQ$ such that 
$\ca{E}_1\otimes_\QQ B^\circ \simeq \MM_2(D')$
by \cite[Remark 1.5.18]{Poonen}.
This implies that $D_1\simeq D'$ and $n_1 = 2$
in \eqref{eq:E1Bcirc}.
We set 
$$x_1=2.$$
Then $x_1 = m_1 = 2$ and 
\[
 \frac{n_1[D_1:\QQ]}{[\ca{E}_1:\QQ]} 
 = \frac{2\cdot 4}{4} = 2 \mid x_1. 
\]
Thus (I), (II) hold, and so $A$ has QM by $B$. 

Assume that the condition (b) or (c) holds.
Since 
$B\otimes_\QQ \QQ(\pi)\simeq\ca{E}$,
the map
$B\rightarrow B\otimes_\QQ \QQ(\pi)$
defined by
$b\mapsto b\otimes 1$
endows $A$ with QM by $B$.

%

(2)
Let
$\ca{E}_i=\End_k^0(A_i)$
for $i=1,2$.
Then $\ca{E}\simeq \ca{E}_1\times \ca{E}_2$. 

Assume that $A$ has QM by $B$, i.e., there is a ring homomorphism
$\imath\colon B\to \ca{E}$. 
Then consider
the composition of the ring homomorphisms
\[
\imath_i:B\overset{\imath}{\longrightarrow}\ca{E} 
\simeq \ca{E}_1\times \ca{E}_2 
\overset{pr_i}{\longrightarrow}\ca{E}_i
\]
for $i=1,2$, where $pr_i$ is the $i$-th projection.
%
Note that each $\imath_i$ is injective
since $B$ is a simple algebra.
By Theorem~\ref{thm:Tate2} (2), we have
$$2=[\ca{E}_i:\QQ(\pi_i)]^{1/2}[\QQ(\pi_i):\QQ]
=[\ca{E}_i:\QQ]^{1/2}[\QQ(\pi_i):\QQ]^{1/2}
\geq [\ca{E}_i:\QQ]^{1/2}.$$
Then
$[\ca{E}_i:\QQ]\leq 4$.
Since
$[B:\QQ]=4$, the $\QQ$-algebra homomorphism
$$\imath_i:B\rightarrow\ca{E}_i$$
is an isomorphism.
Thus the center $\QQ(\pi_i)$ of $\ca{E}_i$ is $\QQ$.
Then $r$ is even, and $\ca{E}_i\simeq \cD_p$
for $i=1,2$ by Corollary \ref{cor:Tate-totally-real}.
Thus the condition (a) holds. 

Conversely,
assume that the condition (a) holds.
In this case, since 
$B\simeq \ca{E}_1\simeq\ca{E}_2$,
we have $\ca{E} \simeq B\times B$. 
Thus $A$ has QM by $B$
by the diagonal map
$B\rightarrow B\times B\simeq\ca{E}$.
\end{proof}

\subsection*{Acknowledgements}

The first author is supported by JSPS KAKENHI Grant Numbers 
JP16K17578, JP21K03187
and
Research Institute for Science and Technology of Tokyo Denki 
University Grant Number Q20K-01 / Japan.
The second author is supported by JSPS KAKENHI Grant Number JP21K11763
and Grant for Basic Science Research Projects 
from The Sumitomo Foundation Grant Number 200484.

\def\bibname{References}

(Keisuke Arai)
Department of Mathematics,
School of Science and Technology for Future Life,
Tokyo Denki University,
5 Senju Asahi-cho, Adachi-ku, Tokyo 120-8551, Japan

\textit{E-mail address}: \texttt{araik@mail.dendai.ac.jp}

\vspace{3mm}

(Yuuki Takai)
Mathematics, Science, Data Science, and AI Program,
Academic Foundations Programs,
Kanazawa Institute of Technology,
7-1 Ohgigaoka, Nonoichi, Ishikawa 921-8501, Japan

\textit{E-mail address}: \texttt{takai@neptune.kanazawa-it.ac.jp}

\end{document}